\newcommand{\opSpan}{\operatorname{Span}}
\newcommand{\opker}{\operatorname{ker}}
\newcommand{\opRange}{\operatorname{Range}}
\newcommand{\opsupp}{\operatorname{supp}}
\newcommand{\opsgn}{\operatorname{sgn}}
\newcommand\numberthis{\addtocounter{equation}{1}\tag{\theequation}}
\newtheorem{Theorem}{Theorem}[section]
\newtheorem{Proposition}[Theorem]{Proposition}
\newtheorem{Remark}[Theorem]{Remark}
\newtheorem{Definition}[Theorem]{Definition}
\newtheorem{Example}[Theorem]{Example}
\newtheorem{Corollary}[Theorem]{Corollary}
\title{On the control of LTI systems with rough control laws}
\author{Lucas Davron\footnote{CEREMADE, Université Paris-Dauphine, CNRS UMR 7534, Université PSL, 75016
Paris, France, \\ \href{mailto:davron@ceremade.dauphine.fr}{davron@ceremade.dauphine.fr}}}
\begin{document}
\maketitle
\begin{abstract}
The theory of linear time invariant systems is well established and allows, among other things, to formulate and solve control problems in finite time. In this context the control laws are typically taken in a space of the form $L^p(0,T;U)$. In this paper we consider the possibility of taking control laws in $(H^1(0,T;U))^*$, which induces non-trivial issues. We overcome these difficulties by adapting the functional setting, notably by considering a generalized final state for the systems under consideration. In addition we collect time regularity properties and we pretend that in general it is not possible to consider control laws in $H^{-1}(0,T;U)$. Then, we apply our results to propose an interpretation of the infinite order of defect for an observability inequality, in terms of controllability properties. 
\end{abstract}
\textbf{Keywords:}  Infinite-dimensional linear systems, Null controllability, Irregular inputs, Sobolev towers, Regularity \par \noindent
\textbf{2020 Mathematics Subject Classification:} 93C05, 93C25, 93C20
\section{Introduction}
In this paper we consider an arbitrary linear time invariant (LTI) system
\begin{equation}\label{system_AB}
\left\lbrace \begin{array}{c c c}
\dot{z}(t) &=& Az(t) + Bu(t),\\
z(0) &=& z_0, \\
\end{array} \right.
\end{equation}
where $z(t)$ is the state of the system at time $t$, and $u(t)$ is the control exerced on the system at time $t$. We will frequently use the notation $\Sigma(A,B)$ to refer to the control system whose evolution is governed by \eqref{system_AB}. Such systems have been extensively studied as they model a wide range of control problems, notably partial differential equations (PDEs), and we refer to \cite{Coron}, \cite{curtain_zwart}, \cite{tucwei} and \cite{weiss_admissibility} for a discussion on these LTI systems and their applications. \newline
\newline
We will make the following assumptions: 
\begin{itemize}
\item The function $z(\cdot)$ takes its values in $X$ (the state space) which is a Hilbert\footnote{As is customary in the LTI formalism, all the vector spaces will be assumed to be complex. However, complex numbers will not be used until the last section of this paper, and the reader may as well forget about the complex structure if it is more convenient.} space; 
\item The function $u(\cdot)$ takes its values in another Hilbert space $U$ (the input space);
\item $A$ is the generator of a $C_0$ semigroup $(S_t)_{t\geq 0}$ on $X$;
\item $B \in \mathcal{L}_c(U;D(A^*)^*)$ is unbounded and 2-admissible\footnote{The space $D(A^*)^*$ is the anti-dual space of $D(A^*)$ with respect to the pivot $X$, which is often denoted by $X_{-1}$, see \cite[Section 2.9]{tucwei} and \cite[Chapitre III]{aubin}. The admissibility of such an operator $B$ was introduced in the seminal paper \cite{weiss_admissibility} to which we refer the reader for more details.}.
\end{itemize}
Such hypotheses are commonly used to derive the well-posedness of \eqref{system_AB} as well as a control theory for $\Sigma(A,B)$. Indeed, fix a finite horizon time $0 < T < \infty$. Under the standing assumptions the problem \eqref{system_AB} is well-posed so that, for any $z_0 \in X$ and $u \in L^2(0,T;U)$, it has a unique solution $z(\cdot) \in C([0,T] ; X)$ (see \textit{e.g.} \cite[Theorem 2.37]{Coron}). Note that from a differential equation perspective it is important to specify what is the concept of solution that is used to obtain well-posedness. In \cite{Coron} the concept of solution is that of transpositions, whereas in \cite{tucwei} and \cite{weiss_admissibility} the authors consider strong solutions. From the perspective of LTI systems, the solution concept is less important as it shall lead to the same input-to-state map 
\[
\left\lbrace \begin{array}{ccc}
(z_0,u) & \longmapsto & z(\cdot) \\
X \times L^2(0,T;U) & \longrightarrow & C([0,T] ; X)
\end{array}\right. ,
\]
which is given by the Duhamel formula 
\[
z(t) = S_tz_0 + \int_0^tS_{t-s}Bu(s)ds,
\]
for any $t \in [0,T]$, $z_0 \in X$ and $u \in L^2(0,T;U)$. Using the above formula one may derive the following fundamental result of control theory, which is known as the duality between observability and controllability.  
\begin{Definition}
The system $\Sigma(A,B)$ is said null controllable (at time $T$) when 
\[
\forall z_0 \in X,\quad \exists u \in L^2(0,T;U),\quad z(T) = 0.
\]
\end{Definition}
\begin{Theorem}{\cite[Theorem 2.44]{Coron}}
The system $\Sigma(A,B)$ is null controllable (at time $T$) if and only if 
\begin{equation}\label{eq:observability}
\exists c > 0,\quad \forall \varphi \in X,\quad \|S_T^* \varphi\|_X^2 \leq c \int_0^T \| B^*S_t^*\varphi\|_U^2 dt.
\end{equation}
\end{Theorem}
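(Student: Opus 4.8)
The plan is to reduce the statement to an abstract comparison between the ranges of two bounded operators and between the norms of their adjoints. First I would introduce the \emph{controllability operator} $\mathcal{B}_T \in \mathcal{L}_c(L^2(0,T;U);X)$ defined by $\mathcal{B}_T u = \int_0^T S_{T-s} Bu(s)\,ds$; that it lands in $X$ (and not merely in $D(A^*)^*$) and is bounded is exactly what the $2$-admissibility of $B$ guarantees. With this notation the Duhamel formula reads $z(T) = S_T z_0 + \mathcal{B}_T u$, so $\Sigma(A,B)$ is null controllable at time $T$ if and only if for every $z_0 \in X$ the vector $S_T z_0$ belongs to $\opRange(\mathcal{B}_T)$; since $z_0$ ranges over all of $X$, this is equivalent to the range inclusion $\opRange(S_T) \subseteq \opRange(\mathcal{B}_T)$.

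Next I would compute the adjoint $\mathcal{B}_T^*$. For $\varphi \in D(A^*)$ and $u \in L^2(0,T;U)$ one has $(\mathcal{B}_T u, \varphi)_X = \int_0^T (u(s), B^* S_{T-s}^* \varphi)_U\,ds$, whence $(\mathcal{B}_T^* \varphi)(s) = B^* S_{T-s}^* \varphi$; extending this identity from $D(A^*)$ to all of $X$ by density (again via admissibility, which is precisely what makes $t \mapsto B^* S_t^* \varphi$ square-integrable) and performing the change of variable $s \mapsto T-s$ gives
\[
\|\mathcal{B}_T^* \varphi\|_{L^2(0,T;U)}^2 = \int_0^T \|B^* S_t^* \varphi\|_U^2\,dt .
\]
Since the left-hand side of \eqref{eq:observability} is just $\|S_T^* \varphi\|_X^2$, the observability inequality is exactly the adjoint estimate $\|S_T^* \varphi\|_X^2 \le c\,\|\mathcal{B}_T^* \varphi\|_{L^2(0,T;U)}^2$ for all $\varphi \in X$.

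The heart of the matter is then a classical range-inclusion lemma (going back to Douglas, and to Dolecki and Russell in the control context): if $M$ and $N$ are bounded operators from Hilbert spaces into a common Hilbert space, then $\opRange(M) \subseteq \opRange(N)$ if and only if there is $c>0$ with $\|M^*\varphi\|^2 \le c\,\|N^*\varphi\|^2$ for every $\varphi$. Applying it with $M = S_T$ and $N = \mathcal{B}_T$ closes the proof by the two reformulations above. For the implication ``estimate $\Rightarrow$ inclusion'' I would observe that the inequality makes the assignment $N^*\varphi \mapsto M^*\varphi$ well defined and bounded on $\opRange(N^*)$; extending it by continuity to the closure and by zero on the orthogonal complement produces a bounded $D$ with $DN^* = M^*$, and taking adjoints yields $M = ND^*$, hence the inclusion.

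For the converse ``inclusion $\Rightarrow$ estimate'', which I expect to be the genuine obstacle, a pointwise solvability is not enough: one must extract a \emph{uniform} constant. I would factor $M = NC$ by letting $Cf$ be the minimal-norm preimage in $(\opker N)^\perp$ of $Mf \in \opRange(N)$; the only real work is to verify that $C$ has closed graph, after which the closed graph theorem gives its boundedness, and $M = NC$ yields $M^* = C^* N^*$ and the estimate with $c = \|C\|^2$. Throughout, the point requiring care is that $B$ is unbounded, so every occurrence of $B^* S_t^* \varphi$ must be read within the admissibility framework rather than as a naive composition, and it is admissibility — not a triviality — that both embeds $\opRange(\mathcal{B}_T)$ into $X$ and gives meaning to the right-hand side of \eqref{eq:observability}.
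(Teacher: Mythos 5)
Your proof is correct and takes essentially the same approach as the paper: the paper itself defers this statement to \cite[Theorem 2.44]{Coron}, and its own generalization (Proposition \ref{prop:contro_general_rigorous}) is proved by exactly your strategy of recasting null controllability as the range inclusion $\opRange(S_T) \subseteq \opRange(\mathcal{B}_T)$ and applying the Douglas range-inclusion/majorization lemma to the adjoints, with admissibility giving meaning to $t \mapsto B^* S_t^* \varphi$ for $\varphi \in X$. No changes are needed.
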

The inequality \eqref{eq:observability} above is commonly referred to as the ``final time observability of the adjoint system'' and for brevity we will refer to it as the ``observability inequality". \newline
\newline
In this paper we consider the possibility of taking $u$ in $\mathcal{U}$, where 
\[
\mathcal{U} = (H^1(0,T;U))^*,\quad \mathrm{or}~~ \mathcal{U} = H^{-1}(0,T;U) := (H^1_0(0,T;U))^*.
\]
Let us motivate such an undertaking. Firstly, enlarging the space $\mathcal{U}$ may be useful because the larger $\mathcal{U}$ is, the easier it is to prove the null controllability of the system $\Sigma(A,B)$. \par 
Secondly, such control laws have already been considered in specific circumstances, notably in \cite[Section 3]{lions_contro} where the author controls several types of wave equations, from the boundary, choosing control laws in spaces closely related to those $\mathcal{U}$ defined above. This should be compared with the general fact that hyperbolic systems generally do not enjoy any regularization property, and may even preserve the regularity of the initial and boundary data, both in space and in time. As a consequence, in order to control a hyperbolic system one may expect to need to use irregular control laws. For further examples we refer to \cite{Cavalcanti} and \cite{LT_wave}, which both deal with wave equations, and also to \cite{bao} which studies another type of second order hyperbolic PDE. We also mention \cite{Rebarber_Zwart}, where the authors consider the stabilization of LTI systems taking control laws in the distribution space 
\[
\mathcal{D}_0'(U) = \{ u \in \mathcal{D}'(\mathbb{R};U) : \opsupp u \subset [0,\infty) \}.
\]
Although such control laws are essentially more general than the one we consider here, our approach is different as we are concerned with finite time controllability. \par 
Thirdly, one may wonder how to translate the observability inequality \eqref{eq:observability} when the right hand side is replaced by the $H^1$-norm:
\[
\|S_T^* \varphi\|_X^2 \leq c \| B^*S_t^*\varphi\|_{H^1(0,T;U)}^2,
\]
in terms of controllability properties. In this view it is desirable to have a definition of what is a control system with control laws in $\mathcal{U}$ as above. \newline
\newline 
The key issue will be to \textit{define} such a control system. The main difficulties can be summed-up as:
\begin{itemize}
\item The state trajectory may be discontinuous with respect to time, hence the final state $z(T)$ is \textit{a priori} ill-defined, and one has to consider a generalized final state. 
\item It may happen that the state trajectory $z(\cdot)$ is continuous in time, but the generalized final state cannot be deduced from the curve $z(\cdot)$. 
\item When $\mathcal{U} = H^{-1}(0,T;U)$, the generalized final state may be ill-defined, even in a very weak sense. 
\end{itemize} 
We now state our main results. We begin by giving a natural sense to the (generalized) final state $z(T)$ when $u(\cdot) \in (H^1(0,T;U))^*$. Until further notice we shall assume that in \eqref{system_AB}, $z_0=0$, which is not a restriction by the superposition principle. Accordingly, for all $u(\cdot) \in L^2(0,T;U)$, we let $z(\cdot)$ be the solution of \eqref{system_AB} with $z_0 = 0$. 
\begin{Theorem}\label{theo:extension_time_T}
The map $u(\cdot) \mapsto (z(\cdot) , z(T))$ has a unique linear and continuous extension from $(H^1(0,T;U))^*$ to $L^2(0,T;D(A^{*2})^*) \times D(A^*)^*$.
\end{Theorem}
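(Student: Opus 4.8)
The plan is to argue by density and the bounded-linear-transformation principle. Since $H^1(0,T;U)$ embeds continuously and densely into $L^2(0,T;U)$ and is reflexive, dualising this embedding shows that $L^2(0,T;U)$ embeds continuously and densely into $(H^1(0,T;U))^*$ through the $L^2$ pivot (the annihilator of $L^2(0,T;U)$ in $H^1(0,T;U)$ is trivial). It therefore suffices to prove that the linear map $u\mapsto(z(\cdot),z(T))$, which is well-defined on $L^2(0,T;U)$ with values in $L^2(0,T;D(A^{*2})^*)\times D(A^*)^*$ since $z\in C([0,T];X)$ and $X\hookrightarrow D(A^{*2})^*$, $X\hookrightarrow D(A^*)^*$, is continuous for the norm induced by $(H^1(0,T;U))^*$ on its domain; the unique continuous linear extension then follows. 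I treat the two components separately.

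For the final state, fix $\psi\in D(A^*)$ and use the Duhamel formula together with the definition of $B^*$ to write, for $u\in L^2(0,T;U)$,
\[
\langle z(T),\psi\rangle_{D(A^*)^*,D(A^*)}=\int_0^T (u(s),B^*S_{T-s}^*\psi)_U\,ds=\langle u,g_\psi\rangle,\qquad g_\psi(s):=B^*S_{T-s}^*\psi,
\]
the last pairing being the $L^2(0,T;U)$ one. The crucial point is that $g_\psi\in H^1(0,T;U)$ with $\|g_\psi\|_{H^1(0,T;U)}\lesssim\|\psi\|_{D(A^*)}$: the $2$-admissibility of $B$ gives $\|g_\psi\|_{L^2(0,T;U)}\lesssim\|\psi\|_X$, and since $\tfrac{d}{ds}S_{T-s}^*\psi=-S_{T-s}^*A^*\psi$ it gives likewise $\|g_\psi'\|_{L^2(0,T;U)}=\|B^*S_{T-\cdot}^*A^*\psi\|_{L^2}\lesssim\|A^*\psi\|_X$ (the differentiation being first justified for $\psi\in D(A^{*2})$ and then extended by density of $D(A^{*2})$ in $D(A^*)$). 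Consequently $|\langle z(T),\psi\rangle|\le\|u\|_{(H^1(0,T;U))^*}\|g_\psi\|_{H^1}\lesssim\|u\|_{(H^1(0,T;U))^*}\|\psi\|_{D(A^*)}$, which bounds $\|z(T)\|_{D(A^*)^*}$.

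For the trajectory, the naive pointwise-in-$t$ version of the above fails, and this is the main obstacle: the natural test function $s\mapsto B^*S_{t-s}^*\eta\,\mathbf{1}_{(0,t)}(s)$ jumps at $s=t$ and is not in $H^1(0,T;U)$, so $z(t)$ cannot be tested against a single $\eta$ directly. I overcome this by dualising globally in time. Given $\Psi\in L^2(0,T;D(A^{*2}))$, Fubini's theorem yields
\[
\int_0^T\langle z(t),\Psi(t)\rangle\,dt=\int_0^T\Big(u(s),\int_s^T B^*S_{t-s}^*\Psi(t)\,dt\Big)_U ds=\langle u,G_\Psi\rangle,\qquad G_\Psi(s):=\int_s^T B^*S_{t-s}^*\Psi(t)\,dt,
\]
and it remains to show $\|G_\Psi\|_{H^1(0,T;U)}\lesssim\|\Psi\|_{L^2(0,T;D(A^{*2}))}$. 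For the $L^2$ bound I introduce $\mathcal{K}\colon L^2(0,T;X)\to L^2(0,T;U)$, $(\mathcal{K}\Phi)(s)=\int_s^T B^*S_{t-s}^*\Phi(t)\,dt$, and observe that its adjoint is $(\mathcal{K}^*v)(t)=\int_0^t S_{t-s}Bv(s)\,ds$, i.e. precisely the input-to-state map, which is bounded $L^2(0,T;U)\to C([0,T];X)\hookrightarrow L^2(0,T;X)$ by $2$-admissibility; hence $\mathcal{K}$ is bounded and $\|G_\Psi\|_{L^2}=\|\mathcal{K}\Psi\|_{L^2}\lesssim\|\Psi\|_{L^2(0,T;X)}$. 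Differentiating the Leibniz integral produces a boundary term and an interior term,
\[
G_\Psi'(s)=-B^*\Psi(s)-\int_s^T B^*S_{t-s}^*A^*\Psi(t)\,dt=-B^*\Psi(s)-(\mathcal{K}(A^*\Psi))(s),
\]
so that $\|G_\Psi'\|_{L^2}\lesssim\|\Psi\|_{L^2(0,T;D(A^*))}+\|A^*\Psi\|_{L^2(0,T;X)}\lesssim\|\Psi\|_{L^2(0,T;D(A^{*2}))}$. This is exactly where two levels of regularity are lost: the boundary term $B^*\Psi(s)$ requires $\Psi(s)\in D(A^*)$ to apply $B^*$, while the interior term $\mathcal{K}(A^*\Psi)$ requires $A^*\Psi(t)\in D(A^*)$, that is $\Psi(t)\in D(A^{*2})$, which accounts for the target $D(A^{*2})^*$ for the trajectory as opposed to $D(A^*)^*$ for the final state.

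Combining the two estimates, the map is continuous from $L^2(0,T;U)$ equipped with the $(H^1(0,T;U))^*$-norm into $L^2(0,T;D(A^{*2})^*)\times D(A^*)^*$, and by density it admits a unique continuous linear extension to $(H^1(0,T;U))^*$. The only technical care needed is to justify the termwise differentiations first on the dense subspaces $D(A^{*2})\subset D(A^*)$ and $L^2(0,T;D(A^{*3}))\subset L^2(0,T;D(A^{*2}))$, where the curves $s\mapsto S_{T-s}^*\psi$ and $s\mapsto S_{t-s}^*\Psi(t)$ are classically differentiable in the relevant graph norms, and then to pass to the limit using the continuity in $\psi$ and $\Psi$ of all the quantities involved.
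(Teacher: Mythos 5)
Your proposal is correct and follows essentially the same route as the paper's: the final state is handled by showing $\psi \mapsto B^*S_{T-\cdot}^*\psi$ maps $D(A^*)$ boundedly into $H^1(0,T;U)$ via admissibility (the paper's Proposition on $F_T$ with $N=1$), and the trajectory by showing the adjoint of the input-to-state map sends $L^2(0,T;D(A^{*2}))$ boundedly into $H^1(0,T;U)$ via the Leibniz rule, then dualizing and invoking density of $L^2(0,T;U)$ in $(H^1(0,T;U))^*$. The only cosmetic difference is that you bound the interior term through the operator $\mathcal{K}$ (bounded on $L^2(0,T;X)$ as the adjoint of the input-to-state map), where the paper estimates the parametrized integral directly using $B^* \in \mathcal{L}_c(D(A^*);U)$; your variant is in fact slightly sharper, but both yield the stated result.
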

In the above statement, the space $D(A^{*2})^*$ is the anti-dual space of $D(A^{*2})$ (the domain of $A^{*2} = (A^*)^2$) with respect to the pivot $X$. We will also consider more general control laws, see Theorem \eqref{theo:extension_precise}. \par
For further reference let us introduce the operators 
\[
\Xi : \left\lbrace \begin{array}{ccc}
u & \longmapsto & z(\cdot) \\
L^2(0,T;U) & \longrightarrow & C([0,T] ; X)
\end{array} \right. , \quad
\Xi_T : \left\lbrace \begin{array}{ccc}
u & \longmapsto & z(T) \\
L^2(0,T;U) & \longrightarrow & X
\end{array} \right. ,
\]
which will be called respectively the state curve and the final state. The above Theorem can be equivalently formulated as these operators having extensions, respectively from $(H^1(0,T;U))^*$ to $L^2(0,T;D(A^{*2})^*)$ and from $(H^1(0,T;U))^*$ to $D(A^*)^*$. We will keep the same notations for the extensions.
\begin{Example}\label{ex:toy_model}
In order to illustrate the situation, let us consider the toy-model 
\begin{equation} \label{eq:toy_model}
\left\lbrace \begin{array}{cccc}
\dot{x}(t) &=& f(t),& 0 \leq t \leq T, \\
x(0) &=& 0,
\end{array}\right.
\end{equation}
where $0 < T < \infty$ is a fixed final time and $x,f:(0,T) \rightarrow \mathbb{R}$ are functions. If $f \in L^1(0,T)$, then the solution of \eqref{eq:toy_model} is given by
\[
\forall t \in [0,T],\quad x(t) = \int_0^t f(s)ds.
\]
For this system, the above Theorem simply states that the operators
\[
\Xi_T : f(\cdot) \mapsto \int_0^T f(s) ds,\quad \Xi : f(\cdot) \mapsto \left[ t \mapsto \int_0^t f(s)ds \right],
\]
respectively have $\mathcal{L}_c((H^1(0,T))^* ; \mathbb{R})$ and $\mathcal{L}_c((H^1(0,T))^* ; L^2(0,T))$ extensions, where $\mathcal{L}_c$ stands for the set of linear and continuous maps. Consider the source term $f = \delta_T \in (H^1(0,T))^*$, it is elementary to check that $\Xi f$ is almost everywhere vanishing, while
\[
\Xi_T f = 1.
\]
Therefore, the relation
\[
\Xi_T f = (\Xi f)(T), \quad f \in L^1(0,T),
\]
does not hold anymore when $f \in (H^1(0,T))^*$, even if $\Xi f \in C([0,T])$. 
\end{Example}
We see on this basic example that in general, one cannot compute (or even estimate) the generalized final state from the generalized state curve. \newline
\newline
Next we claim that in full generality, the generalized state curve $z(\cdot)$ cannot be better than $L^2$ in time.  
\begin{Proposition}\label{prop:sharp_reg}
Assume that $B \neq 0$. Then there exists $u \in (H^1(0,T;U))^*$ such that
\[
\forall 0 \leq t_0 < t_1 \leq T,\quad \forall p > 2,\quad z(\cdot) \notin L^p(t_0,t_1; D(A^{*\infty})^*).
\]
\end{Proposition}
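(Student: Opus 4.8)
The plan is to reduce the vector-valued assertion to a scalar one by pairing the (generalized) state curve with a single smooth test vector, and then to produce a scalar control whose associated trajectory is singular on a dense set of times. Since $B \neq 0$, fix $u_0 \in U$ with $Bu_0 \neq 0$ in $D(A^*)^*$. Because $D(A^{*\infty})$ is dense in $D(A^*)$ for the graph norm, the canonical map $D(A^*)^* \hookrightarrow D(A^{*\infty})^*$ is injective, so one can pick $\varphi \in D(A^{*\infty})$ with $c_0 := \langle Bu_0, \varphi \rangle \neq 0$. As $\varphi$ is a smooth vector of the adjoint semigroup, the scalar kernel $k(\tau) := \langle Bu_0, S_\tau^* \varphi \rangle$ is of class $C^\infty$ on $[0,T]$, with $k(0) = c_0 \neq 0$. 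The role of $\varphi$ is that, should $z(\cdot)$ belong to $L^p(t_0,t_1; D(A^{*\infty})^*)$, then the scalar function $\psi := \langle z(\cdot), \varphi \rangle$ would belong to $L^p(t_0,t_1)$; it thus suffices to exhibit a single control $u \in (H^1(0,T;U))^*$ for which $\psi \notin L^p(t_0,t_1)$ for every subinterval and every $p > 2$.

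Given $f \in L^2(0,T)$ to be chosen, I would take $u := g \otimes u_0$, where $g \in (H^1(0,T))^*$ is the functional $\phi \mapsto \int_0^T f \phi'$, so that $u \in (H^1(0,T;U))^*$. For $g \in L^2(0,T)$ the trajectory satisfies $\psi(t) = \int_0^t k(t-s) g(s)\, ds$, hence $\psi = P \circ \Xi (u)$, where $P \colon w \mapsto \langle w(\cdot), \varphi \rangle$ is continuous on $L^2(0,T; D(A^{*2})^*)$ and $\Xi$ is the extension provided by Theorem~\ref{theo:extension_time_T}. Computing the $L^2$-adjoint of the causal convolution $g \mapsto k * g$ and upgrading its codomain to $H^1(0,T)$ (using the smoothness of $k$, the adjoint of $w$ being $s \mapsto \int_s^T k(t-s) w(t)\, dt$, which vanishes at $T$), one identifies $\psi$ for our $g$ by density of $L^2(0,T)$ in $(H^1(0,T))^*$ and uniqueness of the continuous extension; an integration by parts then yields $\psi = -k(0) f - k' * f$. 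Since $k'$ is bounded and $f \in L^1(0,T)$, the term $k' * f$ is bounded on $(0,T)$, so $\psi$ has exactly the same local $L^p$-integrability as $f$.

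It remains to build $f \in L^2(0,T)$ with $\int_{t_0}^{t_1} |f|^p = +\infty$ for all $0 \le t_0 < t_1 \le T$ and all $p > 2$, which I expect to be the main obstacle. The borderline singularity $h(s) := |s|^{-1/2} (\log(e/|s|))^{-1} \mathbf{1}_{\{|s| < \rho\}}$ lies in $L^2$ but in no $L^p$ with $p > 2$. Enumerating the rationals $(q_n)$ of $(0,T)$ and setting $f := \sum_n c_n\, h(\cdot - q_n)$ with $c_n > 0$ small enough that $\sum_n c_n \|h(\cdot - q_n)\|_{L^2} < \infty$ produces a nonnegative $f \in L^2(0,T)$. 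Every subinterval contains some $q_n$, near which $f \ge c_n h(\cdot - q_n)$, so $f$ fails to be $p$-integrable there for each $p > 2$. By the previous step the same defect is inherited by $\psi = \langle z(\cdot), \varphi \rangle$, whence $z(\cdot) \notin L^p(t_0,t_1; D(A^{*\infty})^*)$. The difficulty is genuinely twofold: making the singular set dense while keeping $f$ square integrable, and verifying that the (one order of) smoothing carried by $k$ cannot upgrade the integrability — both are resolved above.
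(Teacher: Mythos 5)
Your proof is correct, and its skeleton coincides with the paper's: the control $u = g\otimes u_0$ with $\langle g,\phi\rangle = \int_0^T f\phi'$ (the paper's $-\alpha'\otimes u_0$), a test vector $\varphi\in D(A^{*\infty})$ with $\langle Bu_0,\varphi\rangle\neq 0$ obtained from the density of $D(A^{*\infty})$ in $D(A^*)$, and a nonnegative $f\in L^2(0,T)$ that lies in $L^p$ of no subinterval for any $p>2$. Where you genuinely diverge is in how the generalized trajectory is identified. The paper works at the operator level: it splits $F = F^1+F^2$ (integral kernels $S_{t-s}-1$ and $1$), proves $F^1\in\mathcal{L}_c(\mathcal{U}_{-1};C_{(0)}([0,T];X_{-2}))$ and $F^2\in\mathcal{L}_c(\mathcal{U}_{-1};L^2(0,T;X_{-1}))$ by duality (via the auxiliary operators $\Phi$ and $\Psi$, the identification $L^\infty(0,T;X_{-2}) = (L^1(0,T;X_2))'$ and a Poincar\'e inequality), and then argues by contradiction, invoking the Riesz representation theorem, that $\langle F^2u,\varphi\rangle\in L^p(t_0,t_1)$ would force $f\in L^p(t_0,t_1)$. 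You instead reduce to a scalar Volterra convolution at the outset: pairing with $\varphi$ produces the $C^\infty$ kernel $k(\tau)=\langle Bu_0,S_\tau^*\varphi\rangle$, and an explicit adjoint computation gives the closed formula $\psi = -k(0)f - k'\ast f$, after which the boundedness of $k'\ast f$ and $k(0)\neq 0$ finish the argument with no contradiction step. Your identity is exactly the scalar shadow of the paper's splitting ($-k(0)f$ corresponds to $\langle F^2u,\varphi\rangle$ and $-k'\ast f$ to $\langle F^1u,\varphi\rangle$, which the paper shows is continuous in time), but it is reached more elementarily, bypassing the vector-valued mapping lemmas. You also supply what the paper merely asserts, namely an explicit construction of the everywhere-locally-non-$L^p$ function (translates of $|s|^{-1/2}(\log(e/|s|))^{-1}$ centered at the rationals); and your identification of the extension by density of $L^2(0,T)$ in $(H^1(0,T))^*$ is sound, so the proof is complete in the paper's scalar sense of membership in $L^p(t_0,t_1;D(A^{*\infty})^*)$.
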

Especially, even after weakening the topology of $X$ to become the universal extrapolation space $D(A^{*\infty})^*$, the state trajectory $z(\cdot)$ may be discontinuous with respect to time. We complete this negative result by a method to get time continuity for the state curve. We consider
\[
\opker B^* = \{ \varphi \in D(A^*) : B^* \varphi = 0 \},
\]
which is a Hilbert space when endowed with the $D(A^*)$-norm. Assuming that it is a dense subset of $X$, we may consider $(\opker B^*)^*$ its anti-dual space with respect to the pivot $X$. 
\begin{Proposition}\label{prop:time_continuity}
Assume that $\opker B^*$ is dense in $X$. Then, the mapping $u \mapsto z(\cdot)$ extends linearly and continuously from $(H^1(0,T;U))^*$ to $C([0,T] ; (\opker B^*)^*)$.
\end{Proposition}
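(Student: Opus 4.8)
The plan is to build the extension by transposition, testing the state against elements of $\opker B^*$ and using the Gelfand triple $\opker B^* \hookrightarrow X \hookrightarrow (\opker B^*)^*$. For $u \in L^2(0,T;U)$ and $\psi \in \opker B^*$ the Duhamel formula gives, for every $t \in [0,T]$,
\[
\langle z(t), \psi\rangle_{(\opker B^*)^*,\, \opker B^*} = (z(t),\psi)_H = \int_0^t \bigl(u(s),\, B^* S_{t-s}^* \psi\bigr)_H\, ds = \langle u, \phi_{t,\psi}\rangle_{\mathcal U,\, H^1},
\]
where $\phi_{t,\psi}(s) := \mathbf 1_{[0,t]}(s)\, B^* S_{t-s}^* \psi$. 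I would take this last identity as the \emph{definition} of the extended state: once $\phi_{t,\psi}$ is shown to lie in $H^1(0,T;U)$ with a norm controlled by $\|\psi\|_{\opker B^*}$ uniformly in $t$, the right-hand side makes sense for all $u \in \mathcal U$ and defines $z(t)\in(\opker B^*)^*$ with $\|z(t)\|_{(\opker B^*)^*}\lesssim\|u\|_{\mathcal U}$.

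First I would establish the $H^1$-regularity of $\phi_{t,\psi}$, which is the crux. Writing $g_\psi(r):=B^*S_r^*\psi$, one has for $\psi\in D(A^{*2})$ the identity $g_\psi(r)=B^*\psi+\int_0^r B^*S_\rho^*A^*\psi\,d\rho = B^*\psi + \int_0^r g_{A^*\psi}(\rho)\,d\rho$, so that $g_\psi\in H^1(0,T;U)$ with $g_\psi'=g_{A^*\psi}$; the $2$-admissibility of $B$, in its dual form $\|B^*S_\cdot^*\varphi\|_{L^2(0,T;U)}\lesssim\|\varphi\|_H$ for $\varphi\in D(A^*)$, applied to $\varphi=\psi$ and $\varphi=A^*\psi$, then yields $\|g_\psi\|_{H^1(0,T;U)}\lesssim\|\psi\|_{D(A^*)}$. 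Since $D(A^{*2})$ is dense in $D(A^*)$, the map $\psi\mapsto g_\psi$ extends to a bounded operator $D(A^*)\to H^1(0,T;U)$, and passing to the limit in $g_\psi(0)=B^*\psi$ shows $g_\psi(0)=0$ precisely when $\psi\in\opker B^*$. This vanishing is exactly what makes the extension by zero admissible: for $\psi\in\opker B^*$, $\phi_{t,\psi}(s)=\mathbf 1_{[0,t]}(s)\,g_\psi(t-s)$ is continuous across $s=t$ (its value there is $g_\psi(0)=0$), so no Dirac mass appears in its weak derivative, whence $\phi_{t,\psi}\in H^1(0,T;U)$, and a change of variables gives $\|\phi_{t,\psi}\|_{H^1(0,T;U)}\le\|g_\psi\|_{H^1(0,T;U)}\lesssim\|\psi\|_{\opker B^*}$ uniformly in $t$. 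This balancing of the admissibility estimate against the boundary condition carried by $\opker B^*$ is where the hypothesis of the proposition is genuinely used, and I expect it to be the main obstacle.

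It remains to prove continuity in $t$ and uniqueness, and here I would argue by density rather than estimating $\|\phi_{t',\psi}-\phi_{t,\psi}\|_{H^1}$ directly — the latter being delicate because translation is not norm-continuous on $H^1$. The uniform bound above shows that the evaluation operators $\Xi(t):u\mapsto z(t)$ are bounded from $\mathcal U$ to $(\opker B^*)^*$ uniformly in $t\in[0,T]$. For $u\in L^2(0,T;U)$ the constructed $z(t)$ coincides with the genuine mild solution, which is continuous into $X$ and hence into $(\opker B^*)^*$, so $t\mapsto z(t)$ is continuous for every $u$ in the dense subspace $L^2(0,T;U)\subset\mathcal U$. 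Given an arbitrary $u\in\mathcal U$ and $u_n\to u$ in $\mathcal U$ with $u_n\in L^2$, the uniform bound gives $\sup_t\|\Xi(t)u_n-\Xi(t)u\|_{(\opker B^*)^*}\lesssim\|u_n-u\|_{\mathcal U}\to0$, so $t\mapsto z(t)$ is a uniform limit of continuous $(\opker B^*)^*$-valued curves and is therefore continuous. The same density of $L^2(0,T;U)$ in $\mathcal U$ forces uniqueness of the continuous extension, which completes the proof.
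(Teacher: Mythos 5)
Your proposal is correct and is essentially the paper's own argument: the paper proves this proposition as the base case ($N=0$, $M=1$) of Proposition \eqref{prop:time_reg_arbitrary}, whose initialization likewise tests the state against $\psi \in \opker B^*$, rewrites the pairing as $\langle u , 1_{[0,t]}(\cdot)B^*S_{t-\cdot}^*\psi \rangle$, shows this test function lies in $H^1(0,T;U)$ with a $t$-uniform bound by combining the admissibility estimate with the vanishing $B^*\psi = 0$ at the gluing point $s=t$, and concludes by density of $L^2(0,T;U)$ in $(H^1(0,T;U))^*$. The only difference is presentational: you spell out the uniform-limit argument giving continuity in $t$ and the uniqueness of the extension, which the paper leaves implicit.
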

In Proposition \eqref{prop:time_reg_arbitrary} we will strengten this result, giving hypotheses ensuring that the state curve is of class $C^N$ with respect to time. Note carefully that Proposition \eqref{prop:time_continuity} does not contradict Proposition \eqref{prop:sharp_reg} above. Indeed if $z(\cdot) \in C([0,T] ; (\opker B^*)^*)$, to infer that that $z(\cdot) \in C([0,T] ; D(A^{*\infty})^*)$ one needs a natural embeding $(\opker B^*)^* \hookrightarrow D(A^{*\infty})^*$. The only reasonable way to get such an inclusion would be to have $D(A^{*\infty})$ being a dense subset of $\opker B^*$, which is the case only if $B=0$\footnote{If $D(A^{*\infty})$ is dense in $\opker B^*$, being $D(A^{*\infty})$ dense in $D(A^*)$ we get $\opker B^*$ dense in $D(A^*)$. Because  $\opker B^*$ is closed in $D(A^*)$, this means $D(A^*) = \opker B^*$, so that $B^*=0$, and $B=0$.}. \newline
\newline
Further, we consider control laws $u$ in $H^{-1}(0,T;U)$, and we will see that it is not obvious to make sense of the generalized final state for such $u$. Consider again the toy-model \eqref{eq:toy_model}, since 
	\[
	\forall f \in L^2(0,T),\quad x(T) = \int_0^T f(t)dt = \langle f , 1 \rangle,
	\]
we clearly see that it is not possible to extend the linear form $f \mapsto x(T)$ to the whole of $H^{-1}(0,T)$. Indeed if this was possible, the evaluation against the function $1$ would be in $(H^1_0(0,T))''$, and since $H^1_0(0,T)$ is a reflexive space we would get $1 \in H^1_0(0,T)$, a contradiction. In Section \eqref{sec:counter_h-1} we will propose a more convincing example of system $\Sigma(A,B)$, in infinite dimension, where this obstruction is blatant. Conversely, we will also propose a functional setting for which it makes sense to take $u \in H^{-1}(0,T;U)$. \newline
\newline
After having understood that the right functional setting is to take $u \in (H^1(0,T;U))^*$ and to consider the generalized final state, we obtain almost for free the duality between controllability and observability.
\begin{Proposition}The system $\Sigma(A,B)$ is null controllable (at time $T$) with initial data in $X$ and control laws in $(H^1(0,T;U))^*$ if and only if there is a constant $c >0$ such that for all $\varphi \in D(A^*)$, there holds 
\[
\| S_T^* \varphi \|_{X} \leq c  \| B^* S_t^* \varphi \|_{H^1(0,T;U)}.
\]
\end{Proposition}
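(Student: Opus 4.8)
The plan is to recast null controllability as a range inclusion in the extrapolation space $D(A^*)^*$, and then to invoke the classical duality lemma that compares the ranges of two operators through a comparison of their adjoints.

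First I would fix $z_0 \in X$ and use the superposition principle to write the generalized final state as $S_T z_0 + \Xi_T u \in D(A^*)^*$, where $\Xi_T \in \mathcal{L}_c((H^1(0,T;U))^* ; D(A^*)^*)$ is the extension furnished by Theorem \ref{theo:extension_time_T}. The requirement ``$z(T)=0$'' then reads $\Xi_T u = -S_T z_0$ in $D(A^*)^*$. As $z_0$ runs over $X$, the vectors $-S_T z_0$ describe exactly $\opRange(S_T)$, viewed inside $D(A^*)^*$ through the dense inclusion $X \hookrightarrow D(A^*)^*$. Hence null controllability with control laws in $(H^1(0,T;U))^*$ is equivalent to
\[
\opRange(S_T) \subseteq \opRange(\Xi_T) \quad \text{in } D(A^*)^*,
\]
where $S_T$ on the left is regarded as an element of $\mathcal{L}_c(X ; D(A^*)^*)$.

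Second, all the spaces at play are Hilbert spaces (the anti-dual of a Hilbert space is again Hilbert), hence reflexive, so I would apply the standard range-inclusion lemma (see e.g. \cite[Lemma 2.48]{Coron}): for bounded operators $F,G$ into a common Hilbert space $G_0$ one has $\opRange(F) \subseteq \opRange(G)$ if and only if there is $c>0$ with $\|F^*\psi\| \leq c\|G^*\psi\|$ for every $\psi$ in the anti-dual of $G_0$. Here $G_0 = D(A^*)^*$, whose anti-dual with respect to the pivot $X$ is $D(A^*)$ itself, so the comparison ranges over $\psi \in D(A^*)$, matching the statement. It remains to identify the two adjoints. That of $S_T \in \mathcal{L}_c(X ; D(A^*)^*)$ comes from $\langle S_T x, \psi \rangle_{D(A^*)^*,D(A^*)} = (S_T x, \psi)_X = (x, S_T^*\psi)_X$, giving the left-hand side $\|S_T^*\varphi\|_X$. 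For $\Xi_T$, since $L^2(0,T;U)$ is dense in $(H^1(0,T;U))^*$, it suffices to test against $u \in L^2(0,T;U)$: using the Duhamel formula and the adjoint of the extrapolated semigroup yields $\langle \Xi_T u, \psi\rangle = \int_0^T (u(s), B^* S_{T-s}^*\psi)_U\,ds$, so that $\Xi_T^*\psi \in H^1(0,T;U)$ is the function $s \mapsto B^* S_{T-s}^*\psi$. As the time reversal $s \mapsto T-s$ is an isometry of $H^1(0,T;U)$, we get $\|\Xi_T^*\psi\|_{H^1(0,T;U)} = \|t \mapsto B^* S_t^*\psi\|_{H^1(0,T;U)}$, which is precisely the right-hand side; substituting both norms into the lemma concludes.

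The main obstacle I expect is this identification of $\Xi_T^*$: one must navigate carefully the pivot and anti-dual identifications, in particular check that the Hilbert adjoint provided by the lemma agrees in norm with the transpose computed against the anti-dual pairing, and justify by density that the formula obtained on $L^2$ controls genuinely determines $\Xi_T^*$ as an $H^1(0,T;U)$-valued operator. Once this is settled, the reduction to a range inclusion and the application of the lemma are routine.
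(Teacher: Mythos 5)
Your proposal is correct and follows essentially the same route as the paper: the paper proves this statement as a special case of its general duality result (Proposition \ref{prop:contro_general_rigorous}), whose proof likewise recasts null controllability as a range inclusion, applies the Douglas range-inclusion lemma from \cite[Proof of Theorem 2.44]{Coron}, and then identifies the two adjoints through the anti-dual pairings, handling exactly the pivot-compatibility subtlety you flag. The only difference is bookkeeping: the paper carries along projections $P$, $Q$ and an output operator $C$ for generality, while you specialize directly to $\mathcal{X}=X$, $\mathcal{U}=(H^1(0,T;U))^*$, $C=\opId$.
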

As for the other results, this Proposition will be extended to encompass various choices for the spaces of initial data and control laws, see Proposition \eqref{prop:contro_general_rigorous}. \newline
\newline
As an application of these results, we interpret a result of \cite{Zhang_Zuazua}, where the authors study the fluid-structure model
\begin{equation} \label{eq_zhang_zuazua}
\left\lbrace \begin{array}{cccc}
u_t &=& u_{xx}, & 0 < x < 1, \\
v_{tt} &=& v_{xx}, & -1 < x < 0,\\
u(t,1) &=& g_1(t), \\
v(t,-1) &=& 0, \\
u(t,0) &=& v(t,0), \\
u_x(t,0) &=& v_x(t,0), \\
\end{array}\right.
\end{equation}
where $g_1(t)$ is the control. They prove that this system is not null controllable, showing that the observability inequality \eqref{eq:observability} is not satisfied. In fact they show a stronger result, namely that this observability inequality has an infinite order of defect, in the sense that the weakened observability inequality 
\begin{equation}\label{eq_weakened_obs_N}
\|S_T^* \varphi \|_{X}^2 \lesssim \| B^*S_t^* \varphi \|_{H^N(0,T;U)}^2,
\end{equation}
never holds for any $N \in \mathbb{N}$. They then suggest that the system \eqref{eq_zhang_zuazua} is not null controllable with controls in $H^{-s}(0,T)$, for all $s \geq 0$ (see \cite{Zhang_Zuazua}, after the proof of Theorem 4.2). In view of our results, we propose here a justified and more natural interpretation of the above infinite defect in the observability inequality, in the sense that we consider control laws belonging to what we believe to be the ``right" class of irregular controls. To this aim consider $X_h$ the subspace of $X$ generated by the hyperbolic eigenvectors of $A$, and $P_h$ the associated spectral projection, which will be both introduced in Section \eqref{Section:application}. 
\begin{Proposition}\label{prop:zz_nc} Let $N \in \mathbb{N}$ and $0 < T < \infty$ arbitrary. Then the system \eqref{eq_zhang_zuazua} is not null controllable (at time $T$), with output operator $P_h$, initial states in $X_h \cap D(A^N)$ and control laws in $(H^N(0,T))^*$. 
\end{Proposition}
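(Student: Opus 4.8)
The plan is to read off the result from the controllability--observability duality of Proposition~\eqref{prop:contro_general_rigorous}, specialised to order $N$, and then to feed in the infinite order of defect established in \cite{Zhang_Zuazua}. The role of the output operator $P_h$, of the initial states in $X_h \cap D(A^N)$ and of the control laws in $(H^N(0,T))^*$ is precisely to isolate the hyperbolic part of \eqref{eq_zhang_zuazua}, which carries the obstruction, while placing ourselves in the Sobolev-tower framework in which the order-$N$ duality and the generalised final state are available.

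First I would apply Proposition~\eqref{prop:contro_general_rigorous} in the present configuration. This yields that the system \eqref{eq_zhang_zuazua}, with output $P_h$, initial data in $X_h \cap D(A^N)$ and controls in $(H^N(0,T))^*$, is null controllable at time $T$ if and only if there is a constant $c>0$ with
\[
\| S_T^* \varphi \|_{X} \leq c\, \| B^* S_t^* \varphi \|_{H^N(0,T)}
\]
for every adjoint state $\varphi$ in the range of $P_h^*$, i.e.\ tested only against the hyperbolic modes. After squaring, this is exactly the weakened observability inequality \eqref{eq_weakened_obs_N} of order $N$, restricted to $X_h$.

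Next I would invoke \cite[Theorem 4.2]{Zhang_Zuazua}: the authors show that for \eqref{eq_zhang_zuazua} the observability inequality has an infinite order of defect, namely that \eqref{eq_weakened_obs_N} fails for every $N \in \mathbb{N}$, and their counterexample is built from the hyperbolic eigenvectors of $A$, hence survives the restriction to $X_h$. Therefore the displayed inequality cannot hold for any $c>0$, and the equivalence of the previous step shows that the announced null controllability fails, for every $N$ and every $T$.

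The main obstacle is to verify that the specialisation of Proposition~\eqref{prop:contro_general_rigorous} produces \emph{exactly} the inequality disproved in \cite{Zhang_Zuazua}. Two compatibility checks are needed: that choosing $P_h$ as output operator transforms, on the dual side, into testing the observation $B^* S_t^* \varphi$ only against the hyperbolic adjoint states (so that the parabolic modes, for which controllability does hold, are discarded); and that $D(A^N)$ is the correct space of initial data from the Sobolev-tower viewpoint, legitimising controls of order $N$ and making the generalised final state well-defined. Once these identifications are secured the conclusion is immediate, since the failure of observability transfers to the failure of controllability through the contrapositive of the duality.
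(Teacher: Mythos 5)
Your overall plan (reduce non-controllability to the failure of an observability inequality via duality, then invoke the Zhang--Zuazua defect) is the same as the paper's, but the execution has a genuine gap at precisely the point you defer as a ``compatibility check''. First, Proposition \eqref{prop:contro_general_rigorous} cannot be applied verbatim here: it requires the space of initial data to be a closed subspace of some $X_M = D(A^{*M})$ (the tower is built on $A^*$), whereas $X_h \cap D(A^N)$ is built from powers of $A$. The paper has to rerun the Douglas-lemma argument by hand, after checking that $P_h^* X_N \subset X_N$ (so that $P_h \in \mathcal{L}_c(X_{-N};X_{-N})$ and Definition \eqref{def:NC} applies) and that $X_h \cap D(A^N) = P_h D(A^N)$, which turns null controllability into the range inclusion $\opRange P_h S_T P_h \subset \opRange P_h F_T$. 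Second, and more seriously, the inequality this duality produces is not the one you wrote: since the initial data lie in $D(A^N)$, the left-hand side carries the dual norm $\|\cdot\|_{D(A^N)^*}$, not the $X$-norm, namely
\[
\| P_h^*S_T^*P_h^* \varphi \|_{D(A^N)^*} \leq c \, \| B^* S_t^* P_h^* \varphi\|_{H^N(0,T)},\qquad \varphi \in D(A^{*N}).
\]
Because $\|\psi\|_{D(A^N)^*} \leq \|\psi\|_X$, this is strictly weaker than the restriction of \eqref{eq_weakened_obs_N} to the hyperbolic modes. Disproving your $X$-norm inequality (which is essentially what \cite{Zhang_Zuazua} provides) therefore does not disprove the inequality that is actually equivalent to controllability, and the contrapositive does not close the argument.

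Consequently, a qualitative appeal to the infinite order of defect is not enough; quantitative input is needed to rule out the weaker inequality. The paper tests it against the hyperbolic eigenvectors $\varphi_k^h$ and must bound the left side from below: using the biorthogonal family $(z_k^h)$, which consists of eigenvectors of $A$, it obtains $\|\varphi_k^h\|_{D(A^N)^*} \gtrsim (1+|\lambda_k^h|^{2N})^{-1/2}$ (estimate \eqref{eq:eqtimate_DAN_hyp_eigen}), i.e.\ only polynomially small in $\lambda_k^h$, while $\Re \lambda_k^h$ stays bounded by \eqref{eq_asymp_eigen}. Against the exponential bound \eqref{eq_asymp_sorie}, $\|B^*S_t^*\varphi_k^h\|_{H^N(0,T)} \leq c e^{-\sqrt{|k|}}$, and the polynomial growth of $\lambda_k^h$, the ratio of the right side to the left side tends to $0$, so no finite $c$ can exist. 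Your argument becomes correct once you replace the citation of \cite[Theorem 4.2]{Zhang_Zuazua} by these three quantitative estimates applied to the $D(A^N)^*$-norm inequality above.
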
 
We emphasize that in Proposition \eqref{prop:zz_nc} above, considering only the output operator $P_h$ and initial conditions in $X_h$ would amount to project the system $\Sigma(A,B)$ on the $(S_t)_{t\geq 0}$-invariant subspace $X_h$, see \textit{e.g.} \cite[Section 4]{jacob_zwart}. In this view, the most important point here is that we are able to consider control laws in $(H^N(0,T))^*$, while further restricting initial conditions to be taken in $D(A^N)$ is slightly tedious for admissibility considerations but will not induce any difficulty. \newline
\newline
The rest of the paper goes as follows. In Section 2 we construct several spaces that will be useful to extend the operators under consideration. In Section 3 we extend the final state and the input-to-state map. In Section 4 we consider control laws in $H^{-1}$ as well as time regularity for the state curve. In Section 5 we establish the duality between controllability and observability, with extended inputs. In Section 6 we apply our results to the fluid-structure model presented above. \hypertarget{notations}{}
\printunsrtglossary[type=symbols,style=long,title={Notations}]
\section{Construction of the Sobolev towers}\label{sec_sobolev_towers}
Given an operator $G$ which is the generator of a $C_0$ semigroup on a Hilbert space $H$, there is a classical construction of a family of interpolation/extrapolation spaces for $G$ giving rise to the so-called abstract Sobolev tower associated to $G$. Let us briefly recall its construction, for more details we refer to \cite[Section 2.10]{tucwei} and \cite[Section II.5]{engel}. For all $n \in \mathbb{N}$, the space $H_n$ is the domain $D(G^n)$ of $G^n$ endowed with the graph norm 
\[
\|z\|_{H_n}^2 = \|z\|_H^2 + \|G^nz \|_H^2.
\]
Note that by convention, $G^0 = 1$, so that $H_0 = H$. This definitions makes $H_n$ a Hilbert space as $G^n$ is closed (see \cite[Theorem 7, Section 9, Chapter 7]{dunford_schwartz}). Further we define for every $n \in \mathbb{N}^*$ the space $H_{-n}$ by
\[
H_{-n} = D(G^{*n})^*,
\]
that is the anti-dual space of $D(G^{*n})$ with respect to the pivot space $H$. This defines a family $(H_n)_{n \in \mathbb{Z}}$ of Hilbert space such that for all $n \in \mathbb{Z}$, the inclusion $H_{n+1} \subset H_n$ is continuous and dense. We represent this Sobolev tower by 
\[
... \supset D(G^{*2})^* \supset D(G^*)^* \supset H \supset D(G) \supset D(G^2) \supset ...
\]
These spaces are convenient, and for instance for all $n \in \mathbb{Z}$ the operator $G$ generates a $C_0$ semigroup on $H_n$ (see \cite[Proposition 2.10.4]{tucwei}). \par 
In order to extend the operators $\Xi$ and $\Xi_T$ (respectively the state curve and the final state), we will use slightly different families of Hilbert spaces. These families can be thought of as modifications of the standard Sobolev tower, as they will serve the same purpose of providing a scale of regularity. For any $N \in \mathbb{N}$, we let 
\[
X_N = D(A^{*N}),
\]
endowed of the graph norm, which is a Hilbert space. As before we have
\[
X_0 = X,
\]
and for all $N \in \mathbb{N}^*$ we define 
\[
X_{-N} = (X_N)^*,
\]
where $(X_N)^*$ is the anti-dual space of $X_N$ with respect to $X$. The family of Hilbert spaces $(X_N)_{N \in \mathbb{Z}}$ has the following natural properties:
\begin{itemize}
\item For all $N_1 \leq N_2 \in \mathbb{Z}$, the inclusion $X_{N_2} \subset X_{N_1}$ is dense and continuous.
\item For all $N \in \mathbb{Z}$, there holds $(X_N)^* = X_{-N}$.
\item For all $N_1 \leq N_2 \in \mathbb{N}$, the successive duality brackets $\langle \cdot , \cdot \rangle_{X_{-N_2} , X_{N_2}}$ and $\langle \cdot , \cdot \rangle_{X_{-N_1} , X_{N_1}}$ are compatible in the sense that
\[
\forall z \in X_{-N_1},\quad \forall \varphi \in X_{N_2},\quad \langle z , \varphi \rangle_{X_{-N_1} , X_{N_1}} = \langle z , \varphi \rangle_{X_{-N_2} , X_{N_2}}.
\]
\end{itemize}
Observe that $(X_N)_{N \in \mathbb{Z}}$ is not one of the standard Sobolev towers associated respectively to $A$ or $A^*$. This can be seen by writting $(X_N)_{N \in \mathbb{Z}}$ as 
\[
... \supset D(A^{*2})^*  \supset D(A^*)^* \supset X \supset D(A^*) \supset D(A^{*2}) \supset ...
\]
This choice of Sobolev tower is determined by the two properties
\[
\forall N \in \mathbb{N},\quad X_N = D(A^{*N}),
\]
and
\[
\forall N \in \mathbb{Z},\quad (X_N)^* = X_{-N},
\]
which are essential for our method of extension. Indeed, our results are shown by first obtaining a regularity property for the adjoint operator (\textit{ergo} the first property) and second by deducing an extension result for the primal operator (\textit{ergo} the second property). \par 
The Sobolev tower used for the control laws is very similar. For any $M \in \mathbb{N}$ we let 
\[
\mathcal{U}_M = H^M(0,T;U),
\]
endowed of its standard Hilbert space structure. For $M \in \mathbb{N}^*$, we let 
\[
\mathcal{U}_{-M} = \left(H^M(0,T;U) \right)^*,
\]
where the anti-dual space is taken with respect to the pivot $L^2(0,T;U)$. These spaces are such that the three aforementioned natural properties of $(X_N)_{N \in \mathbb{Z}}$ are also satisfied by $(\mathcal{U}_M)_{M \in \mathbb{Z}}$. \par 
Note that in general for $N \geq 1$, 
\begin{itemize}
\item $A^*$ is a generator on $X_{N}$, but may fail to be a generator on $X_{-N}$;
\item $A$ is a generator on $X_{-N}$, but may fail to be a generator on $X_N$.
\end{itemize}
\section{Extensions}
From now on, we fix $U$ and $X$ two Hilbert spaces. Let $A$ be the infinitesimal generator of a $C_0$ semigroup $(S_t)_{t\geq 0}$ on $X$ and $B \in \mathcal{L}_c(U;D(A^*)^*)$ be 2-admissible. We fix a finite time horizon $0 < T < \infty$, as well as the Sobolev towers $(X_N)_{N \in \mathbb{Z}}$ and $(\mathcal{U}_M)_{M \in \mathbb{Z}}$ constructed in Section \eqref{sec_sobolev_towers}. This Section is devoted to the proof of the following result. 
\begin{Theorem}\label{theo:extension_precise}
For any $N \in \mathbb{N}^*$, the mapping $(z_0,u) \mapsto z(T)$ has a unique linear and continuous extension from $X_{-N} \times \mathcal{U}_{-N}$ to $X_{-N}$. Furthermore, the mapping $(z_0,u) \mapsto z(\cdot)$ has a unique linear and continuous extension from $X_{-2} \times \mathcal{U}_{-1}$ to $L^2(0,T;X_{-2})$. 
\end{Theorem}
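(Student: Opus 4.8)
The plan is to prove both extensions by duality, exploiting the two defining features of the towers $(X_N)_{N\in\mathbb{Z}}$ and $(\mathcal{U}_M)_{M\in\mathbb{Z}}$: that the positive rungs are the domains $D(A^{*N})$ and the Sobolev spaces $H^M(0,T;U)$, on which time and space regularity can be read off, and that each negative rung is the anti-dual of the corresponding positive rung for the $X$-- and $L^2(0,T;U)$--pivots. Concretely, for each map I first show that its $L^2$--$X$ adjoint sends the relevant positive rung of one tower boundedly into the positive rung of the other, and then I transpose. The contribution $S_T z_0$ (resp.\ $S_\cdot z_0$) of the initial datum and the contribution $\Xi_T u$ (resp.\ $\Xi u$) of the control are treated separately and added. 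Uniqueness is automatic: $X_0\times\mathcal{U}_0=X\times L^2(0,T;U)$ is dense in $X_{-N}\times\mathcal{U}_{-N}$, and a continuous map is determined by its values on a dense subspace.

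For the final state, I first handle $z_0\mapsto S_T z_0$. Since $A^*$ generates the adjoint semigroup $(S_t^*)$ and $S_t^* A^{*N}=A^{*N}S_t^*$ on $D(A^{*N})$, the operator $S_T^*$ is bounded on $X_N=D(A^{*N})$ for the graph norm; transposing, $S_T$ extends to a bounded operator on $X_{-N}=(X_N)^*$. For $u\mapsto\Xi_T u$, the Duhamel formula and the pivot pairings give the adjoint $\Xi_T^*\varphi=\big[\,s\mapsto B^* S_{T-s}^*\varphi\,\big]$, which by $2$-admissibility is bounded $X\to L^2(0,T;U)$. The key point is that for $\varphi\in X_N$ the successive time-derivatives are $\tfrac{d^j}{ds^j}\,B^* S_{T-s}^*\varphi=(-1)^j\,B^* S_{T-s}^* A^{*j}\varphi$ for $0\le j\le N$; since $A^{*j}\varphi\in X$ and $B^* S_\cdot^*(A^{*j}\varphi)\in L^2(0,T;U)$ by admissibility, and since $\|A^{*j}\varphi\|_X\lesssim\|\varphi\|_{X_N}$ for $j\le N$ by continuity of the tower inclusions, one gets $\Xi_T^*\varphi\in H^N(0,T;U)=\mathcal{U}_N$ with $\|\Xi_T^*\varphi\|_{\mathcal{U}_N}\lesssim\|\varphi\|_{X_N}$. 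Transposing $\Xi_T^*\colon X_N\to\mathcal{U}_N$ yields the extension $\Xi_T\colon\mathcal{U}_{-N}\to X_{-N}$, and adding the $z_0$-part proves the first statement.

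For the state curve, the $z_0$-part is immediate: $A$ generates a $C_0$ semigroup on $X_{-2}$, so $z_0\mapsto S_\cdot z_0$ is bounded $X_{-2}\to C([0,T];X_{-2})\hookrightarrow L^2(0,T;X_{-2})$. For the $u$-part, I transpose $\Xi\colon L^2(0,T;U)\to L^2(0,T;X)$ against the pivot $L^2(0,T;X)$; Fubini gives $\Xi^*\psi=\big[\,s\mapsto\int_s^T B^* S_{t-s}^*\psi(t)\,dt\,\big]$, bounded $L^2(0,T;X)\to L^2(0,T;U)$. The heart of the matter is to show that $\Xi^*$ maps $L^2(0,T;X_2)$ into $H^1(0,T;U)$. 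Writing $w=\Xi^*\psi$, I claim the weak time-derivative is $w'=-\,B^*\psi-\Xi^*(A^*\psi)$: the boundary contribution at $t=s$ produces $-B^*\psi$, while differentiating $S_{t-s}^*$ produces $-\Xi^*(A^*\psi)$. Both terms lie in $L^2(0,T;U)$ --- the first because $B^*\colon D(A^*)\to U$ and $\psi\in L^2(0,T;D(A^*))$, the second because $A^*\psi\in L^2(0,T;X)$ and $\Xi^*$ is bounded there --- whence $w\in H^1(0,T;U)$ with $\|w\|_{\mathcal{U}_1}\lesssim\|\psi\|_{L^2(0,T;X_2)}$. Transposing $\Xi^*\colon L^2(0,T;X_2)\to\mathcal{U}_1$ gives $\Xi\colon\mathcal{U}_{-1}\to L^2(0,T;X_{-2})$, and adding the $z_0$-part proves the second statement.

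The main obstacle, and the step requiring genuine care, is the identification of these weak derivatives in the presence of admissibility. Pointwise the symbol $B^* S_t^* x$ is meaningful only for $x\in D(A^*)$, whereas the operators $\Xi_T^*,\Xi^*$ and the formulas above must be read through their $L^2$-continuous (admissibility) extensions. I would therefore first establish the derivative identities classically on a dense set of smooth data --- $\varphi\in D(A^{*\infty})$ for $\Xi_T^*$, and tensor fields $\psi=\sum_i\theta_i\,\varphi_i$ with $\theta_i\in C_c^\infty(0,T)$ and $\varphi_i\in D(A^{*\infty})$ for $\Xi^*$, where the differentiation under the integral and the integration by parts producing the boundary term $-B^*\psi$ are fully justified --- and then pass to the limit: since weak differentiation is a closed operation and the candidate right-hand sides depend continuously on the data in the relevant tower norms, the identities persist on all of $X_N$ (resp.\ $L^2(0,T;X_2)$). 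The remaining verifications (the Fubini exchanges giving the adjoints, and the compatibility of the successive pivot pairings used when transposing) are routine given the tower properties recorded in Section~\ref{sec_sobolev_towers}.
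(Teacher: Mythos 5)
Your proposal is correct, and its architecture coincides with the paper's: split off the $z_0$-contribution via Duhamel (handled by the semigroup acting on the extrapolation spaces), prove that the $L^2$--$X$ adjoints map positive rungs into positive rungs ($F_T^*\in\mathcal{L}_c(X_N;\mathcal{U}_N)$ and $F^*\in\mathcal{L}_c(L^2(0,T;X_2);\mathcal{U}_1)$), transpose using $(X_N)^*=X_{-N}$, and justify the derivative identities first on a dense class of regular data (the paper takes $\varphi\in X_{N+1}$, you take $D(A^{*\infty})$ and smooth tensors --- same mechanism, both legitimate). The final-state half matches the paper's Proposition \ref{prop:extension_F_T} essentially line by line. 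The one genuine difference is your treatment of the derivative term in the state-curve half: the paper estimates $\int_s^T B^*S_{t-s}^*A^*\varphi(t)\,dt$ pointwise in $s$, using $B^*\in\mathcal{L}_c(D(A^*);U)$ and the boundedness of $(S_t^*)_{t \geq 0}$ on $D(A^*)$, which consumes a \emph{second} power of $A^*$ (hence the $L^1(0,T;X_2)$-norm in \eqref{eq:estimation_Linfty} and the codomain $X_{-2}$); you instead recognize this term as $\Xi^*(A^*\psi)$ and bound it by the admissibility-derived boundedness of $\Xi^*$ from $L^2(0,T;X)$ to $L^2(0,T;U)$, which consumes only \emph{one} power of $A^*$. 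Your estimate is therefore sharper than what you claim: inspected closely, it proves $F^*\in\mathcal{L}_c(L^2(0,T;X_1);\mathcal{U}_1)$ and hence, after transposition, $F\in\mathcal{L}_c(\mathcal{U}_{-1};L^2(0,T;X_{-1}))$, which strengthens the second assertion of the Theorem (and is consistent with Proposition \ref{Prop:optimal_reg_N=1}, which obstructs integrability in time, not the spatial rung). The only step you fold into ``routine'' remarks is the identification $(L^2(0,T;X_2))^*\cong L^2(0,T;X_{-2})$ (resp.\ with $X_1$) needed to land in a Bochner space after transposing; the paper justifies this via the Riesz isomorphism for Hilbert-valued $L^2$ spaces, and it deserves an explicit word in your write-up as well.
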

\begin{Remark}
Although it is possible to extend the state curve for control laws in $\mathcal{U}_{-N}$ when $N > 1$, using the same techniques as in the case $N=1$, we will not state or prove any result in this direction. This is because on the one hand, in such a general setting the state curve might be irrelevant as illustrated in Example \eqref{ex:toy_model}, and on the other hand the proof of such an extension is rather lengthy and technical but not much instructive.
\end{Remark}
Let us introduce again the input-to-state and final state operators, but now without the hypothesis $z_0 = 0$,
\begin{equation}\label{eq:def_Xi}
\Xi_T : \left\lbrace \begin{array}{ccc}
X \times L^2(0,T;U) & \longrightarrow & X \\
(z_0,u) & \longmapsto & z(T)
\end{array}\right.,\quad 
\Xi : \left\lbrace \begin{array}{ccc}
X \times L^2(0,T;U) & \longrightarrow & C([0,T];X) \\
(z_0,u) & \longmapsto & z(\cdot)
\end{array}\right..
\end{equation}
The Theorem above states that these operators may be extended. In view of Duhamel's formula 
\[
z(t) = S_tz_0 + \int_0^t S_{t-s}Bu(s)ds,
\]
one may deal with the cases $z_0 = 0$ and $u=0$ separately. Note that the case $u=0$ is trivial as $(S_t)_{t \geq 0}$ is also a $C_0$ semigroup on $X_{-N}$ for all $N \in \mathbb{N}^*$ (see \cite[Proposition 2.10.4]{tucwei}), so that 
\[
\left[ t \mapsto S_tz_0 \right] \in C([0,\infty) ; X_{-N}),
\]
for all $z_0 \in X_{-N}$. Therefore, we are left to extend the operators 
\[
F_T : \left\lbrace \begin{array}{ccc}
L^2(0,T;U) & \longrightarrow & X \\
u & \longmapsto & z(T)
\end{array}\right., \quad 
F : \left\lbrace \begin{array}{ccc}
L^2(0,T;U) & \longrightarrow & L^2(0,T;X)\\
u & \longmapsto & z(\cdot)
\end{array}\right.,
\]
where $z(\cdot)$ is the solution of \eqref{system_AB} with $z_0=0$. 
\subsection{The final state}
In this Subsection we deal with the extension of $F_T$. Because the change of variable $t = T-s$ induces an isomorphism on all the spaces that we will consider, we may as well assume that $F_T$ is defined by 
\[
F_T u = \int_0^T S_tBu(t)dt.
\]
\begin{Proposition}\label{prop:extension_F_T}
For all $N \in \mathbb{N}$, the operator $F_T$ has a unique $\mathcal{L}_c(\mathcal{U}_{-N} ; X_{-N})$ extension. 
\end{Proposition}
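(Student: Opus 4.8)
The plan is to prove the claim by duality, exploiting the two structural properties of the Sobolev towers emphasized in Section \eqref{sec_sobolev_towers}: that $X_N = D(A^{*N})$ and that $(X_N)^* = X_{-N}$ (and likewise $(\mathcal{U}_N)^* = \mathcal{U}_{-N}$), with compatible duality brackets. Since $F_T \in \mathcal{L}_c(\mathcal{U}_0 ; X_0)$ by $2$-admissibility — this is precisely the case $N = 0$ — it admits a Hilbert adjoint $F_T^* \in \mathcal{L}_c(X_0 ; \mathcal{U}_0)$, and a direct computation against $\varphi \in D(A^*)$ and $u \in L^2(0,T;U)$ gives
\[
(F_T u , \varphi)_X = \int_0^T (u(t) , B^* S_t^* \varphi)_U \, dt,
\]
so that $F_T^* \varphi = [t \mapsto B^* S_t^* \varphi]$, first on the dense subspace $D(A^*)$ and then on all of $X$ by continuity. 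Granting that $F_T^*$ maps $X_N$ continuously into $\mathcal{U}_N$, its dual operator $(F_T^*)^* \colon (\mathcal{U}_N)^* = \mathcal{U}_{-N} \to (X_N)^* = X_{-N}$ is continuous, and the compatibility of the duality brackets shows that $(F_T^*)^*$ coincides with $F_T$ on $\mathcal{U}_0 \subset \mathcal{U}_{-N}$; since $\mathcal{U}_0$ is dense in $\mathcal{U}_{-N}$, this is the desired unique extension. Everything thus reduces to the regularity statement $F_T^* \in \mathcal{L}_c(X_N ; \mathcal{U}_N)$.

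Second — and this is the heart of the matter — I would establish this regularity of the adjoint. The idea is that each time-derivative of $t \mapsto B^* S_t^* \varphi$ trades one order of time regularity for one application of $A^*$, through the identity $\frac{d}{dt} B^* S_t^* \varphi = B^* S_t^* A^* \varphi$, so that admissibility may be invoked at every order. Concretely, for $\varphi \in D(A^{*(N+1)})$ the curve $t \mapsto S_t^* \varphi$ is classically differentiable up to order $N$ in a sufficiently strong topology, and one checks that $g := B^* S_\cdot^* \varphi \in C^N([0,T];U)$ with $g^{(k)}(t) = B^* S_t^* A^{*k} \varphi$ for $0 \leq k \leq N$. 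Since $A^{*k}\varphi \in X$ for $k \leq N$, the $2$-admissibility of $B$ yields $\|g^{(k)}\|_{L^2(0,T;U)} \lesssim \|A^{*k}\varphi\|_X \lesssim \|\varphi\|_{X_N}$, whence $\|g\|_{\mathcal{U}_N} = \|g\|_{H^N(0,T;U)} \lesssim \|\varphi\|_{X_N}$. Because $A^*$ generates a $C_0$ semigroup on $X_N$, the space $D(A^{*(N+1)})$ is dense in $X_N$; the estimate being continuous for the $X_N$-norm, it extends to all $\varphi \in X_N$, and the limit identifies $F_T^* \varphi$ as an element of $\mathcal{U}_N$, giving $F_T^* \in \mathcal{L}_c(X_N ; \mathcal{U}_N)$.

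The main obstacle I anticipate lies entirely in this second step, namely in justifying rigorously the derivative identity $g^{(k)}(t) = B^* S_t^* A^{*k}\varphi$: the operator $B^*$ is only defined on $D(A^*) = X_1$, so differentiating $B^* S_t^* \varphi$ requires $t \mapsto S_t^* \varphi$ to be differentiable in the $D(A^*)$-topology, not merely in $X$. This is exactly why I would work first with the genuinely smooth data $\varphi \in D(A^{*(N+1)})$ — where $S_t^* \varphi$ remains in $D(A^{*k})$ and the relevant difference quotients converge in the graph norm — and only afterwards pass to the limit by density. The transposition step and the verification that $(F_T^*)^*$ extends $F_T$ are then soft consequences of the Hilbert-tower formalism, resting only on the compatibility of the duality brackets recorded in Section \eqref{sec_sobolev_towers}, so they present no real difficulty; and the case $N = 0$ needs no argument, being the admissibility hypothesis itself.
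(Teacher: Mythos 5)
Your proposal is correct and follows essentially the same route as the paper's proof: both reduce everything to the adjoint regularity $F_T^* \in \mathcal{L}_c(X_N;\mathcal{U}_N)$, establish it by differentiating $t \mapsto B^*S_t^*\varphi$ for $\varphi \in X_{N+1}$ (precisely so that the derivatives exist in the $D(A^*)$-topology where $B^*$ is continuous, the subtlety you correctly flag), invoke $2$-admissibility at each order, and conclude by density of $X_{N+1}$ in $X_N$ followed by transposition. The only cosmetic difference is that the paper obtains the extension by directly estimating the pairing $\langle F_T u , \varphi \rangle_{X_{-N},X_N}$ rather than naming the dual operator $(F_T^*)^*$.
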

\begin{proof}
\underline{Step 1:} We first consider the adjoint of $F_T^*$ of $F_T$, which writes as
\[
F_T^* : \left\lbrace \begin{array}{ccc}
\varphi & \longmapsto & \left[ t \mapsto B^*S_t^*\varphi \right] \\
X & \longrightarrow & L^2(0,T;U)
\end{array}\right. ,
\]
and we show that it is of class $\mathcal{L}_c(X_{N} ; \mathcal{U}_N)$ for all $N \in \mathbb{N}$. To this aim let $N \in \mathbb{N}$ and fix $\varphi \in X_{N+1}$. We first claim that  
\begin{equation}\label{eq:reg_Q}
\left[ t \mapsto B^*S_t^*\varphi \right] \in C^N([0,\infty) ; U).
\end{equation}
Indeed, being $(S_t^*)_{t\geq 0}$ a $C_0$ semigroup on $X$ with generator $A^*$, we automatically have 
\[ \left[ t \mapsto S_t^*\varphi \right] \in C^{N+1}([0,\infty) ; X),\quad \frac{d^k}{dt^k}S_t^*\varphi = S_t^*A^{*k}\varphi,\quad \forall k = 0,..., N+1, \]
in view of  
\[
\varphi \in X_{N+1} = D(A^{*(N+1)}).
\]
We therefore deduce that 
\[ \left[ t \mapsto S_t^*\varphi \right] \in C^N([0,\infty) ; D(A^*)),\]
so that being $B^* \in \mathcal{L}_c(D(A^*) ; U)$ we obtain the claimed regularity \eqref{eq:reg_Q}. \par 
Second, we show that $F_T^*$ is $\mathcal{L}_c(X_N;\mathcal{U}_N)$. Because $X_{N+1}$ is a dense linear subspace of $X_N$ and $\mathcal{U}_N$ is complete, it is enough to show that 
\[
\exists c > 0,\quad \forall \varphi \in X_{N+1} ,\quad \| F_T^*\varphi \|_{\mathcal{U}_N} \leq c \|\varphi\|_{X_N}.
\]
To this aim we estimate 
\begin{align*}
\|F_T^* \varphi \|_{\mathcal{U}_N}^2 &= \sum_{k=0}^N \left\| \frac{d^k}{dt^k} B^*S_t^*\varphi \right\|_{L^2(0,T;U)}^2 \\
&= \sum_{k=0}^N \left\| B^*S_t^*(A^*)^k\varphi \right\|_{L^2(0,T;U)}^2 \\
&\leq \sum_{k=0}^N C_{\mathrm{adm}}^2 \| (A^*)^k\varphi\|_X^2\\
&\lesssim C_{\mathrm{adm}}^2\| \varphi\|_{X_N}^2,
\end{align*}
where $C_{\mathrm{adm}} > 0$ arises from the admissibility of $B$.\par 
\underline{Step 2:} We use the previously shown property of $F_T^*$ to deduce the desired extension result. \par 
We compute for all $u \in L^2(0,T;U)$ and $\varphi \in X_N$,
\begin{align*}
| \langle F_Tu,\varphi\rangle_{X_{-N},X_N} | &= |(F_Tu,\varphi)_X| \\
&= | (u,F_T^*\varphi)_{L^2(0,T;U)} \\
&=| \langle u , F_T^*\varphi \rangle_{\mathcal{U}_{-N} , \mathcal{U}_N}| \\
&\leq \|u\|_{\mathcal{U}_{-N}} \|F_T^*\varphi\|_{\mathcal{U}_N} \\
&\leq \|u\|_{\mathcal{U}_{-N}}\|F_T^*\|_{\mathcal{L}_c(X_N;\mathcal{U}_N)}\|\varphi\|_{X_N},
\end{align*}
so that 
\[
\|F_Tu\|_{X_{-N}} \leq \|F_T^*\|_{\mathcal{L}_c(X_N;\mathcal{U}_N)}\|u\|_{\mathcal{U}_{-N}},
\]
which brings the claimed extension property for $F_T$ and ends the proof. 
\end{proof}
In fact the above proof yields a slight refinement of the previously shown extension for the final state. Recall that the final state $\Xi_T$ was defined in \eqref{eq:def_Xi}, and that this operator is \textit{a priori} linear and continuous from $X \times L^2(0,T;U)$ to $X$. 
\begin{Corollary}\label{coro:extension_NM}
Let $(N,M) \in \mathbb{Z}^2$. We then have the following: 
\begin{enumerate}
\item If $N,M \geq 0$, the map $\Xi_T$ is $\mathcal{L}_c(X_N \times \mathcal{U}_M ; X)$.
\item If $N < 0$ or $M < 0$, the map $\Xi_T$ has a unique extension which is $\mathcal{L}_c(X_N \times \mathcal{U}_M ; X_{\min(N,M)})$.
\end{enumerate}
\end{Corollary}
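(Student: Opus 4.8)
The plan is to exploit Duhamel's formula $\Xi_T(z_0,u) = S_Tz_0 + \int_0^T S_{T-s}Bu(s)\,ds$ to split the analysis into the two independent contributions $z_0 \mapsto S_Tz_0$ and $u \mapsto \int_0^T S_{T-s}Bu(s)\,ds$. Since the time reversal $u(\cdot) \mapsto u(T-\cdot)$ is an isometric isomorphism of each space $\mathcal{U}_M$, the second contribution is, up to this isomorphism, exactly the operator $F_T$, for which Proposition \ref{prop:extension_F_T} already provides an $\mathcal{L}_c(\mathcal{U}_{-N};X_{-N})$ extension for every $N \in \mathbb{N}$. The two structural ingredients I would rely on are this proposition for the control term, and the fact recalled at the end of Section \ref{sec_sobolev_towers} that $A$ generates a $C_0$ semigroup on each negative space $X_{-p}$ (for $p \geq 1$), so that $S_T$ acts continuously on $X_{-p}$, for the initial-data term. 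Throughout I would use that the tower inclusions $X_{N_2} \hookrightarrow X_{N_1}$ and $\mathcal{U}_{M_2} \hookrightarrow \mathcal{U}_{M_1}$ (for $N_2 \geq N_1$ and $M_2 \geq M_1$) are continuous and dense.

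For item (1), with $N,M \geq 0$, no extension is needed: the inclusions $X_N \hookrightarrow X_0 = X$ and $\mathcal{U}_M \hookrightarrow \mathcal{U}_0 = L^2(0,T;U)$ are continuous, while $S_T \in \mathcal{L}_c(X;X)$ and $F_T \in \mathcal{L}_c(\mathcal{U}_0;X)$. Composing, both $z_0 \mapsto S_Tz_0$ and $u \mapsto F_Tu$ are continuous from $X_N$, respectively $\mathcal{U}_M$, into $X$, and their sum gives $\Xi_T \in \mathcal{L}_c(X_N \times \mathcal{U}_M ; X)$.

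For item (2), set $k = \min(N,M)$, which is negative whenever $N<0$ or $M<0$, and write $k = -p$ with $p \geq 1$; the claim is that $\Xi_T$ extends into $X_k = X_{-p}$. For the initial-data term, since $S_T$ acts continuously on $X_{-p}$ and $X_N \hookrightarrow X_{-p}$ continuously and densely (as $N \geq k$), the map $z_0 \mapsto S_Tz_0$ extends uniquely and continuously from $X_N$ to $X_{-p}$. For the control term I would distinguish two sub-cases: if $M \geq 0$ then $F_Tu \in X \hookrightarrow X_{-p}$, giving continuity from $\mathcal{U}_M$ into $X_{-p}$ by the inclusion argument of item (1); if $M < 0$ then Proposition \ref{prop:extension_F_T} yields $F_T \in \mathcal{L}_c(\mathcal{U}_M ; X_M)$ and, since $M \geq k$, the continuous inclusion $X_M \hookrightarrow X_{-p}$ places the image in the target space. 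Summing the two continuous contributions gives the asserted extension in $\mathcal{L}_c(X_N \times \mathcal{U}_M ; X_{\min(N,M)})$.

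Uniqueness of the extension in both items follows from density: the subspace $X_{\max(N,0)} \times \mathcal{U}_{\max(M,0)}$ lies inside the original domain $X \times L^2(0,T;U)$ on which $\Xi_T$ is defined, and is dense in $X_N \times \mathcal{U}_M$ by the density of the tower inclusions, so any two continuous extensions agreeing there must coincide. The only genuinely delicate point is the bookkeeping of the index $\min(N,M)$: one must check in each sign configuration of $(N,M)$ that both the $S_Tz_0$ term and the $F_Tu$ term land in $X_{\min(N,M)}$, and not merely in the larger of $X_N$ and $X_M$. Verifying that $S_T$ does act on the negative spaces $X_{-p}$ — which is exactly why $A$, rather than $A^*$, must generate on the negative side of this particular tower — is what makes the initial-data term behave well, and is the key place where the specific choice of tower made in Section \ref{sec_sobolev_towers} is used.
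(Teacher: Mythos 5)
Your proposal is correct and takes essentially the same approach as the paper's proof: both split $\Xi_T$ via Duhamel's formula, treat the $S_Tz_0$ term using the fact that $S_T$ acts continuously on each extrapolation space $X_{-p}$, treat the $F_Tu$ term using Proposition \eqref{prop:extension_F_T} combined with the tower inclusions, and obtain uniqueness by density. The paper merely compresses this same case analysis into the two statements $S_T \in \mathcal{L}_c(X_N ; X_{\min(0,N)})$ and $F_T \in \mathcal{L}_c(\mathcal{U}_M ; X_{\min(0,M)})$ for all $N,M \in \mathbb{Z}$.
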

We will sum-up this Corollary by saying that, for all $(N,M) \in \mathbb{Z}^2$,
\[
\Xi_T \in \mathcal{L}_c(X_N \times \mathcal{U}_M ; X_{\min(0,N,M)}).
\]
\begin{proof}
In view of the Duhamel formula and since the spaces $(X_N)_{N \in \mathbb{Z}}$ are in non-increasing order, it is enough to show that for all $N \in \mathbb{Z}$, there holds 
\[
S_T \in \mathcal{L}_c(X_N ; X_{\min(0,N)}),\quad F_T \in \mathcal{L}_c(\mathcal{U}_N ; X_{\min(0,N)}).
\]
We will only show the first statement, the second one being similar to prove. Observe that if $N \geq 0$, this re-writes as 
\[
S_T \in \mathcal{L}_c(X_N ; X),
\]
which is indeed true since $S_T \in \mathcal{L}_c(X)$ and the inclusion $X_N \subset X$, for such $N \geq 0$, is continuous. If $N <0$, we are left to show that $S_T \in \mathcal{L}(X_N)$, which is a known fact (see \cite[Proposition 2.10.4]{tucwei}).
\end{proof}
\subsection{The input-to-state map}
We now turn our attention to the extension of the operator
\[
F : \left\lbrace \begin{array}{ccc}
L^2(0,T;U) & \longrightarrow & L^2(0,T;X) \\
u & \longmapsto & z(\cdot)
\end{array}\right..
\]
\subsubsection{Extension by duality}
In this Section we show the following result.
\begin{Theorem}\label{theo:extension_curve}
The operator $F$ has a unique linear and continuous extension from $(H^1(0,T;U))^*$ to $L^2(0,T;X_{-2})$.
\end{Theorem}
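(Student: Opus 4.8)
The plan is to follow the duality scheme used in the proof of Proposition~\ref{prop:extension_F_T}: I first identify the adjoint $F^*$ of $F \colon L^2(0,T;U) \to L^2(0,T;X)$ and establish a gain of time regularity for it, then transfer this to an extension of $F$ by duality. The relevant dualities are $\mathcal{U}_{-1} = (H^1(0,T;U))^*$ with pivot $L^2(0,T;U)$, and $(L^2(0,T;X_{-2}))^* = L^2(0,T;X_2)$ with pivot $L^2(0,T;X)$ (using $(X_{-2})^* = X_2$), the brackets being compatible with the scalar products of the pivots.

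First I would compute $F^*$. Starting from Duhamel's formula $z(t) = \int_0^t S_{t-s}Bu(s)\,ds$, for $\psi$ with values in $X_1 = D(A^*)$ one moves $S_{t-s}$ onto $\psi$ and uses $B^* \in \mathcal{L}_c(X_1;U)$ together with Fubini to obtain
\[
(Fu,\psi)_{L^2(0,T;X)} = \int_0^T \Big(u(s), \int_s^T B^* S_{t-s}^*\psi(t)\,dt\Big)_U ds,
\]
so that $F^*\psi(s) = \int_s^T B^* S_{t-s}^*\psi(t)\,dt$.

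The key step is to show $F^* \in \mathcal{L}_c(L^2(0,T;X_2);H^1(0,T;U))$. Write $g = F^*\psi$. The $L^2$-bound $\|g\|_{L^2(0,T;U)} \lesssim \|\psi\|_{L^2(0,T;X)}$ already follows from $F^*$ being the (bounded) adjoint of $F$. For the derivative, differentiating the integral with moving lower endpoint produces a boundary term and an interior term,
\[
g'(s) = -B^*\psi(s) - \int_s^T B^* S_{t-s}^* A^*\psi(t)\,dt = -B^*\psi(s) - F^*(A^*\psi)(s),
\]
where differentiating $S_{t-s}^*$ in $s$ brings down $-A^*$; this is where the hypothesis $\psi \in L^2(0,T;X_2)$ is used, since it guarantees $A^*\psi(t) \in X_1$ so that both $B^*\psi(s)$ and $B^*S_{t-s}^*A^*\psi(t)$ are meaningful. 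The term $B^*\psi$ is controlled by $\|\psi\|_{L^2(0,T;X_1)}$ via $B^* \in \mathcal{L}_c(X_1;U)$, and $F^*(A^*\psi)$ is controlled using again the boundedness of $F^*$ on $L^2(0,T;X)$, applied to $A^*\psi \in L^2(0,T;X_1) \subset L^2(0,T;X)$; both are therefore $\lesssim \|\psi\|_{L^2(0,T;X_2)}$, giving the desired $H^1$-bound. The main technical obstacle is precisely the justification of this differentiation under the integral sign: one must check that $g$ is absolutely continuous with the displayed weak derivative, which I would do by testing against smooth scalar functions and integrating by parts, relying on the $C^1$ regularity in time of $t \mapsto S_t^*\varphi$ for $\varphi \in D(A^*)$ and a density argument reducing to $\psi$ valued in $X_2$.

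Finally I would conclude by duality. For $u \in L^2(0,T;U)$, which is dense in $\mathcal{U}_{-1}$, and $\psi \in L^2(0,T;X_2)$,
\[
|\langle Fu,\psi\rangle_{L^2(0,T;X_{-2}),L^2(0,T;X_2)}| = |\langle u, F^*\psi\rangle_{\mathcal{U}_{-1},\mathcal{U}_1}| \le \|u\|_{\mathcal{U}_{-1}}\|F^*\psi\|_{\mathcal{U}_1} \lesssim \|u\|_{\mathcal{U}_{-1}}\|\psi\|_{L^2(0,T;X_2)}.
\]
Taking the supremum over $\|\psi\|_{L^2(0,T;X_2)}\le 1$ yields $\|Fu\|_{L^2(0,T;X_{-2})} \lesssim \|u\|_{\mathcal{U}_{-1}}$, and density together with completeness of $L^2(0,T;X_{-2})$ provides the unique continuous extension.
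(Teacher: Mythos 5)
Your proposal is correct and follows essentially the same route as the paper: identify the adjoint $F^*\psi(s)=\int_s^T B^*S_{t-s}^*\psi(t)\,dt$, differentiate under the integral sign via the Leibniz rule to get the boundary term $-B^*\psi(s)$ plus the interior term, bound both by $\|\psi\|_{L^2(0,T;X_2)}$, and conclude by duality using the identification $(L^2(0,T;X_2))^* = L^2(0,T;X_{-2})$. The only (harmless) variation is that you control the interior term as $F^*(A^*\psi)$ via the abstract $L^2$-boundedness of $F^*$, whereas the paper estimates it pointwise using that $(S_t^*)_{t\geq 0}$ restricts to a bounded $C_0$ semigroup on $D(A^*)$.
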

Before proving this result we make some comments on the functional space $L^2(0,T;X_{-2})$. Because the inclusion $D(A^{*2}) \subset X$ is continuous and dense, the inclusion
\[
L^2(0,T;D(A^{*2})) \subset L^2(0,T;X),
\]
is also continuous and dense. Taking the adjoint of the latter inclusion, we obtain a continuous inclusion
\[
L^2(0,T;X) \subset (L^2(0,T;D(A^{*2})))^*,
\]
where the right hand side is the anti-dual space of $L^2(0,T;D(A^{*2})^*)$ with respect to the pivot $L^2(0,T;X)$. At this point it is tempting to make the identification 
\begin{equation}\label{eq:dual_L2_Bochner}
(L^2(0,T;D(A^{*2})))^* = L^2(0,T;D(A^{*2})^*).
\end{equation}
In view of \cite[Section 4.1]{diestel_uhl}, for any Hilbert space $H$ the Riesz isomorphism induces a (natural) isomorphism 
\[
L^2(0,T;H') \rightarrow (L^2(0,T;H))',
\]
so that the identification \eqref{eq:dual_L2_Bochner} indeed holds, as $D(A^{*2})$ is a Hilbert space. In other words, the space $L^2(0,T;D(A^{*2})^*)$ is a realization of the anti-dual space of $L^2(0,T;D(A^{*2}))$ with respect to the pivot $L^2(0,T;X)$, with an obvious identification. 
\begin{proof}[Proof of Theorem \eqref{theo:extension_curve}] By duality, if
\begin{equation}\label{eq:reg_state_curve}
F^* \in \mathcal{L}_c(L^2(0,T;D(A^{*2})) ; H^1(0,T;U)),
\end{equation}
then 
\[
F \in \mathcal{L}_c((H^1(0,T;U))^*;(L^2(0,T;D(A^{*2})))^*),
\]
which proves the Theorem in view of \eqref{eq:dual_L2_Bochner}. We are thus left to show that \eqref{eq:reg_state_curve} holds. To do so we acknowledge that, for all $\varphi \in L^2(0,T;D(A^{*2}))$, there holds 
\[
(F^*\varphi)(s) = \int_s^T B^*S_{t-s}^*\varphi(t) dt,
\]
for almost every $s \in (0,T)$. Standard arguments in semigroup theory bring that the above parametrized integral is $C([0,T] ; U)$ and vanishes at $s=T$. Furthermore, the Leibniz rule for the differentiation of parametrized integral yields 
\[
\frac{d}{ds} \int_s^T B^*S_{t-s}^*\varphi(t) dt = -\int_s^T B^*S_{t-s}^*A^*\varphi(t) dt - B^*\varphi(s),\quad \mathrm{in} ~~\mathcal{D}'(0,T;U).
\]
Therefore, \eqref{eq:reg_state_curve} boils down to estimate the $L^2(0,T;U)$-norm of the right hand side in the above formula, in terms of the $L^2(0,T;D(A^{*2}))$-norm of $\varphi$. On the one hand, because $B^* \in \mathcal{L}_c(D(A^*);U)$ we have 
\[
\int_0^T \| B^*\varphi(s) \|_U^2 ds \lesssim \|\varphi\|_{L^2(0,T;D(A^*))}^2 \lesssim \|\varphi\|_{L^2(0,T;D(A^{*2}))}^2.
\]
On the other hand, for all $s \in [0,T]$ we obtain 
\begin{align*}
\left\| \int_s^T B^*S_{t-s}^*A^*\varphi(t) dt \right\|_U &\lesssim \int_s^T \| S_{t-s}^*A^*\varphi(t) \|_{D(A^*)}dt \\
&\lesssim \int_s^T \|A^* \varphi(t) \|_{D(A^*)}dt \\
&\lesssim \|\varphi\|_{L^1(0,T;D(A^{*2}))} \numberthis \label{eq:estimation_Linfty}\\
&\lesssim \|\varphi\|_{L^2(0,T;D(A^{*2}))},
\end{align*}
where the second inequality is from $(S_t^*)_{t\geq 0}$ generating a $C_0$ semigroup on $D(A^*)$. This shows the Theorem
\end{proof}
\subsubsection{Sharp regularity in time}
We show that $L^2$ in time is an optimal regularity in time for $z(\cdot)$, in the sense of the following result. To ease the statement we assume that $z_0 = 0$, but it is clear that the choice of $z_0 \in D(A^{*\infty})^*$ does not matter. 
\begin{Proposition}\label{Prop:optimal_reg_N=1}
Assume that $B \neq 0$. Then, there exists $u \in (H^1(0,T;U))^*$ such that
\[
\forall p >2,\quad \forall 0 \leq t_0 < t_1 \leq T, \quad  z(\cdot) \notin L^p(t_0,t_1;D(A^{*\infty})^*).
\]
\end{Proposition}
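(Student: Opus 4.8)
The plan is to reduce the claim to a one-dimensional statement by testing the (extended) state curve against a single well-chosen vector, and then to build an explicit scalar control producing a borderline singularity of type $L^2 \setminus \bigcup_{p>2}L^p$ near every point of $(0,T)$.

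First I would use $B \neq 0$ to fix data reducing everything to scalars. Since $B \neq 0$ is equivalent to $B^* \neq 0$ on $D(A^*)$, and since $D(A^{*\infty})$ is dense in $D(A^*)$ with $B^* \in \mathcal{L}_c(D(A^*);U)$, there is $\varphi \in D(A^{*\infty})$ with $B^*\varphi \neq 0$; set $u_0 = B^*\varphi \in U$. As in the proof of Proposition \eqref{prop:extension_F_T}, the map $\tau \mapsto B^* S_\tau^* \varphi$ is $C^\infty([0,\infty);U)$, so
\[
h(\tau) := (u_0 , B^* S_\tau^* \varphi)_U
\]
is smooth with $h(0) = \|B^*\varphi\|_U^2 > 0$. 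For a scalar $g \in (H^1(0,T))^*$ I take the control $u = g \otimes u_0 \in (H^1(0,T;U))^*$, so that $z = Fu \in L^2(0,T;X_{-2})$. Using the defining adjoint identity of the extension $F$ against test fields $\psi(t)\varphi$ (with $\psi \in C_c^\infty(0,T)$ and $\varphi \in X_2 = D(A^{*2})$), one checks that $t \mapsto \langle z(t),\varphi\rangle_{X_{-2},X_2}$ equals the causal convolution $h * g$ on $(0,T)$. Since $\varphi \in D(A^{*\infty})$, the functional $\zeta \mapsto \langle \zeta,\varphi\rangle$ is continuous on $D(A^{*\infty})^*$; hence if $z$ were in $L^p(t_0,t_1;D(A^{*\infty})^*)$ for some $p>2$ and some subinterval, then $h*g$ would lie in $L^p(t_0,t_1)$. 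It therefore suffices to build $g$ so that $h*g \in L^2(0,T)$ but $h*g \notin L^p(t_0,t_1)$ for every $p>2$ and every subinterval.

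The construction of $g$ is the heart of the matter. Pick $w \in L^2(0,T)$, $w \geq 0$, with a singularity of type $r^{-1/2}|\log r|^{-1}$ at every rational point: concretely $w = \sum_n 2^{-n}\phi_{q_n}$, where $\{q_n\}$ enumerates the rationals of $(0,T)$ and $\phi_q(t) = |t-q|^{-1/2}|\log|t-q||^{-1}$ is truncated to a small neighbourhood of $q$. Each $\phi_q$ lies in $L^2$ but in no $L^p(I)$ with $p>2$ and $q \in I$, and the series converges in $L^2$, so $w \in L^2(0,T) \setminus \bigcup_{p>2}L^p(t_0,t_1)$ for every subinterval. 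Now define $g \in (H^1(0,T))^*$ by
\[
\langle g , \eta \rangle = -\int_0^T w(s)\,\eta'(s)\,ds, \qquad \eta \in H^1(0,T),
\]
which is continuous on $H^1(0,T)$ with norm $\leq \|w\|_{L^2}$, i.e. $g$ is the distributional derivative of the $L^2$ function $w$ realized at the precise $(H^1)^*$ level. Testing $h * g$ against $\psi \in C_c^\infty(0,T)$ and carrying out the Leibniz differentiation of the parametrized integral $s \mapsto \int_s^T \psi(t)h(t-s)\,dt$ yields the identity
\[
h * g = h(0)\,w + h' * w \qquad \text{in } \mathcal{D}'(0,T).
\]
Because $h'$ is smooth and $w \in L^2$, the term $h'*w$ is continuous on $[0,T]$, hence bounded; as $h(0) \neq 0$, the function $h*g$ has exactly the local integrability of $w$. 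Thus $h*g \in L^2(0,T)$ while $h*g \notin L^p(t_0,t_1)$ for all $p>2$ and all subintervals, which is what was needed.

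I expect the main obstacle to be pinning down the \emph{exact} regularity of the control: the construction must yield a $g$ rough enough to prevent any integrability gain beyond $L^2$, yet still belonging to $(H^1(0,T))^*$ and not to a strictly larger space. This is resolved by taking $g$ to be the distributional derivative of an $L^2$ function (which sits precisely at the $(H^1)^*$ level) and by the clean cancellation $h*g = h(0)w + (\text{continuous})$, in which the hypothesis $h(0) \neq 0$ guarantees that convolution against the smoothing kernel does not regularize away the borderline singularities of $w$. The remaining points—smoothness of $h$, the convolution identity for $\langle z(\cdot),\varphi\rangle$, and the integrability bookkeeping for $w$—are routine.
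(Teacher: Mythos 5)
Your proof is correct, and at bottom it runs on the same engine as the paper's: the control is the distributional derivative of an $L^2$ scalar profile failing to be $L^p$ ($p>2$) on every subinterval, tensored with a fixed direction $u_0$ such that $Bu_0$ pairs nontrivially with a smooth test vector, and the state then splits into that rough profile times a nonzero constant plus a smoother remainder. The difference lies in the execution, and yours is genuinely leaner. The paper decomposes at the operator level, writing $S_{t-s}=(S_{t-s}-1)+1$, hence $F=F^1+F^2$, and spends the bulk of its proof (Step 2) establishing by duality that $F^1\in\mathcal{L}_c(\mathcal{U}_{-1};C_{(0)}([0,T];X_{-2}))$ and $F^2\in\mathcal{L}_c(\mathcal{U}_{-1};L^2(0,T;X_{-1}))$ --- estimates requiring an auxiliary operator $\Phi$, a Poincar\'e-type inequality and $L^\infty$--$L^1$ duality --- before concluding by contradiction via the Riesz representation theorem. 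You instead test against a single $\varphi\in D(A^{*\infty})$ with $B^*\varphi\neq 0$ from the outset, which collapses everything to the scalar distributional identity $h*g=h(0)w+h'*w$ for $g=w'$, verified by one Leibniz differentiation of the parametrized integral and Fubini; the contradiction is then immediate since $h'*w$ is bounded on $[0,T]$ and $h(0)=\|B^*\varphi\|_U^2\neq 0$. Your route bypasses all operator-norm bookkeeping (and the Riesz representation step entirely), and you construct the rough profile $w$ explicitly where the paper merely posits the existence of such an $\alpha$; what it gives up is the reusable operator-level information of the paper's Step 2, namely that the $(S_{t-s}-1)$-part of the state curve is genuinely continuous in time with values in $X_{-2}$, which localizes exactly where continuity fails. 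Two cosmetic remarks: your appeal to continuity of $\zeta\mapsto\langle\zeta,\varphi\rangle$ on $D(A^{*\infty})^*$ is unnecessary, since with the paper's scalar definition of $L^p(t_0,t_1;D(A^{*\infty})^*)$ the membership $\langle z(\cdot),\varphi\rangle\in L^p(t_0,t_1)$ is part of the definition itself; and, as in the paper, the duality computations should be carried out with real-valued $\psi$ to avoid conjugation bookkeeping in the anti-dual pairings.
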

In this statement, the integrability condition $z \in L^p(t_0,t_1;D(A^{*\infty})^*)$ refers to the scalar integrability 
\[
\forall \varphi \in D(A^{*\infty}),\quad \left[ t \mapsto \langle z(t),\varphi \rangle \right] \in L^p(t_0,t_1),
\]
where $\langle \cdot , \cdot \rangle$ stands (for instance) for the duality pairing between $X_{-2}$ and $X_2$.
\begin{proof}
We will exhibit $u \in (H^1(0,T;U))^*$ and $ \varphi \in D(A^{*\infty})$ such that
\[ \forall p >2,\quad \forall 0 \leq t_0 < t_1 \leq T, \quad \langle z(\cdot) , \varphi \rangle \notin L^p(t_0,t_1).
\]

\underline{Step 1:} We first decompose $z(\cdot) = Fu$ into the sum of two elements, one of which will be proven to be a more regular function of time than the other is. Let $u \in L^2(0,T;U)$ and compute, for all $t \in [0,T]$,
\[
(Fu)(t) = \int_0^t S_{t-s}Bu(s)ds = \int_0^t (S_{t-s}-1)Bu(s)ds + \int_0^t Bu(s)ds =: (F^1u)(t) + (F^2u)(t),
\]
where we have defined two operators $F^1$ and $F^2$ which are \textit{a priori} linear and continuous from $L^2(0,T;U)$ to $C([0,T];X_{-1})$. \par 
\underline{Step 2:} We claim that actually,
\[
F^1 \in \mathcal{L}_c(\mathcal{U}_{-1} ; C_{(0)}([0,T] ; X_{-2})),\quad F^2 \in \mathcal{L}_c(\mathcal{U}_{-1} ; L^2(0,T;X_{-1})),
\]
where the notation $C_{(0)}$ was introduced in the \hyperlink{notations}{Notations}. To prove the claim, let us first deal with the first assertion. Since 
\[
F^1(L^2(0,T;U)) \subset C_{(0)}([0,T];X_{-1}) \hookrightarrow C_{(0)}([0,T];X_{-2}),
\]
and $C_{(0)}([0,T] ; X_{-2})$ is a closed subspace of $L^\infty(0,T;X_{-2})$, it is enough to show that 
\[
F^1 \in \mathcal{L}_c(\mathcal{U}_{-1} ;L^\infty(0,T;X_{-2})).
\]
By standard functional analysis arguments this boils down to establishing that
\[
\exists c > 0,\quad \forall u \in L^2(0,T;U),\quad \|F^1u\|_{L^\infty(0,T;X_{-2})} \leq c \|u\|_{\mathcal{U}_{-1} }.
\]
Because $X_{-2}$ is a Hilbert space, the identification $L^\infty(0,T;X_{-2}) = (L^1(0,T;X_2))'$ is natural  (see \cite[Section 4.1]{diestel_uhl}) and we have the dual representation of the norm:
\[
\forall f \in L^\infty(0,T;X_{-2}),\quad \| f \|_{L^\infty(0,T;X_{-2})} = \sup_{g \in L^1(0,T;X_{2}) \setminus \{ 0 \}} \frac{1}{\|g\|_{L^1(0,T;X_{2})} } \left| \int_0^T \langle f(t),g(t) \rangle_{X_{-2},X_2} dt \right|.
\]
Thus it is enough to show that
\begin{equation}\label{eq:estmiation_Phi}
\begin{split}
\exists c > 0,\quad \forall u \in L^2(0,T;U),\quad & \forall g \in L^1(0,T;X_2), \\
&\quad \left| \int_0^T \langle (F^1u)(t),g(t) \rangle_{X_{-2},X_2} dt \right| \leq c \|u\|_{\mathcal{U}_{-1} } \|g\|_{L^1(0,T;X_{2})}.
\end{split}
\end{equation}
To this aim fix such $u$ and $g$, we compute 
\[
\int_0^T \langle (F^1u)(t),g(t) \rangle_{X_{-2},X_2} dt = \int_0^T \left( u(s) ,\int_s^T B^*(S_{t-s}^*-1)g(t)dt\right)_U ds,
\]
so that defining 
\[
(\Phi g)(s) = \int_s^T B^*(S_{t-s}^*-1)g(t)dt,
\]
we are left to show that 
\[
\Phi \in \mathcal{L}_c(L^1(0,T;X_2) ; H^1(0,T;U)).
\]
It is straightforward that 
\[
\Phi \in \mathcal{L}_c(C([0,T];X_2) ;  C_{(T)}([0,T];U) \cap C^1([0,T];U)),\]
with 
\[
\frac{d}{ds}\Phi g =  - \int_s^T B^*S_{t-s}^*A^*g(t)dt.
\]
Therefore, for any $g \in C([0,T];X_2)$, we obtain the estimation 
\begin{align*}
\|\Phi g \|_{H^1(0,T;U)} &\lesssim \left \| \frac{d}{ds}\Phi g \right\|_{L^2(0,T;U)} \\
&= \left \| \int_s^T B^*S_{t-s}^*A^*g(t)dt \right\|_{L^2(0,T;U)} \\
&\lesssim \left \| \int_s^T B^*S_{t-s}^*A^*g(t)dt \right\|_{L^\infty(0,T;U)} \\
&\lesssim \| g\|_{L^1(0,T;X_2)},
\end{align*}
where the first inequality is from the Poincaré type inequality
\[
\exists C_P > 0,\quad \forall \psi \in H^1_{(T)}(0,T;U),\quad \|\psi\|_{H^1(0,T;U)} \leq C_P \|\psi'\|_{L^2(0,T;U)},
\]
and the third one was obtained in \eqref{eq:estimation_Linfty}. This shows that indeed $\Phi$ has the desired regularity, hence \eqref{eq:estmiation_Phi} holds, and thus $F^1$ has the claimed regularity. \par 
Let us now briefly explain how to obtain the second assertion, which we recall to be
\[
F^2 \in \mathcal{L}_c(\mathcal{U}_{-1} ; L^2(0,T;X_{-1})).
\]
From the computation   
\[
\int_0^T \langle (F^2u)(t),g(t) \rangle_{X_{-2},X_2} dt = \int_0^T \left( u(s) ,\int_s^T B^*g(t)dt\right)_U ds,
\]
we see that the second assertion is equivalent to the operator $\Psi$ defined by
\[
(\Psi g)(s) = \int_s^T B^*g(t)dt,
\]
to be of class $\mathcal{L}_c(L^2(0,T;X_1) ; H^1(0,T;U))$. This regularity for $\Psi$ is straightforward to check, whence the second assertion holds. \par
\underline{Step 3:} From the previous step, we are left to show that the $F^2$ component is irregular. More precisely, we will show that for some $u \in \mathcal{U}_{-1}$ and some $\varphi \in D(A^{*\infty})$,
\[ \forall p >2,\quad \forall 0 \leq t_0 < t_1 \leq T,\quad \langle F^2u , \varphi \rangle \notin L^p(t_0,t_1).
\]
We let $(u_0,\varphi_0) \in U \times D(A^{*\infty})$ be such that $\langle Bu_0 , \varphi_0 \rangle \neq 0$. This is possible because $B$ (and hence $B^*$) is assumed to be non identically zero, and $D(A^{*\infty})$ is dense in $D(A^*)$. We further introduce $\alpha \in L^2(0,T)$ such that 
\[
\forall p > 2,\quad \forall 0 \leq t_0 < t_1 \leq T,\quad \alpha \notin L^p(t_0,t_1).
\]
We then consider the control law $u = -\alpha' \otimes u_0$, that is 
\[ \forall \phi \in H^1(0,T;U),\quad
\langle u , \phi \rangle = \int_0^T \alpha(s)(u_0 , \dot{\phi}(s))_Uds,
\]
which is clearly in $(H^1(0,T;U))^*$. Assume by contradiction that for some $p > 2$ and $0 \leq t_0 < t_1 \leq T$, we have $\langle F^2u , \varphi \rangle \in L^p(t_0,t_1)$. Without loss of generality, we can also assume that $p < \infty$. Then, for all $\varphi \in D(A^{*\infty})$, there exists a constant $c = c(\varphi) > 0$ such that 
\[
\forall \psi \in C_c^\infty(t_0,t_1),\quad \left| \int_{t_0}^{t_1} \langle (F^2u)(t) , \varphi \rangle \psi(t)dt \right| \leq c \|\psi \|_{L^{p'}(t_0,t_1)}.
\]
Taking real valued $\psi$ leads to the computation 
\begin{align*}
\int_{t_0}^{t_1} \langle (F^2u)(t) , \varphi \rangle \psi(t)dt &= \int_0^T \langle (F^2u)(t),\psi(t)\varphi \rangle_{X_{-2} , X_{-2}} dt \\ 
&= \langle F^2u , \psi \otimes \varphi \rangle_{L^2(0,T;X_{-1}) , L^2(0,T;X_{1})} \\
&= \langle u , \Psi(\psi \otimes \varphi) \rangle_{\mathcal{U}_{-1},\mathcal{U}_1} \\
&= ( u , \Psi(\psi \otimes \varphi))_{L^2(0,T;U)} \\
&= \int_0^T \alpha(s) \left(u_0 , \frac{d}{ds} \int_s^T B^*\psi(t) \varphi dt \right)_U ds \\
&= \int_0^T \alpha(s) \left(u_0 , -B^*\psi(s) \varphi \right)_U ds \\
&= - \langle Bu_0 , \varphi \rangle_{X_{-1} , X_1} \int_{t_0}^{t_1} \alpha(s) \psi(s) ds. 
\end{align*}
Taking $\varphi = \varphi_0$ and invoking 
\[
\langle Bu_0 , \varphi \rangle_{X_{-1} , X_1} \neq 0,
\]
we deduce that 
\[
\left| \int_{t_0}^{t_1} \alpha(s) \psi(s) ds \right| \leq c \|\psi\|_{L^{p'}(t_0,t_1)},
\]
for all $\psi \in C_c^\infty(t_0,t_1)$, for a constant $c > 0$ not depending on $\psi$. But this estimate brings $\alpha \in L^p(t_0,t_1)$ in view of the Riesz representation Theorem (see \textit{e.g.} \cite[Theorem 4.11]{brezis}), which is the seeken contradiction.
\end{proof}	
\section{Other functional spaces}\label{sec:optimality_sobolev_tower}
\subsection{Control laws in $H^{-1}$}\label{sec:counter_h-1}
In this section, we consider the possibility of taking $u \in H^{-1}(0,T;U)$, where we recall that
\[
H^{-1}(0,T;U) := (H^1_0(0,T;U))^*.
\]
We will first propose an elementary example for which such control laws do not allow to define the generalized final state. We will then give a condition under which these control laws allow for well-defined generalized final states. 
\subsubsection{A counter-example}\label{sec:counter_example_heat}
To see what could be the obstruction to consider control laws in $H^{-1}(0,T;U)$, it is instructive to try to prove that $F_T$ has a $\mathcal{L}_c(H^{-1}(0,T;U) ; D(A^*)^*)$ extension. By duality, this boils down to showing that 
\[
\forall \varphi \in D(A^*),\quad F_T^* \varphi \in H^1_0(0,T;U),
\]
which is equivalent to 
\[
\forall \varphi \in D(A^*),\quad B^*\varphi = B^*S_T^*\varphi = 0.
\]
If that was the case, the operator $B$ would be null, so that $F_T$ cannot have the announced regularity when $B$ is not identically null. From this we learn that $D(A^*)^*$ is not a good choice of co-domain to extend $F_T$, but defining
\[
\mathcal{W} := \{ \varphi \in D(A^*) : B^* \varphi = B^*S_T^*\varphi = 0 \},
\]
one may expect $F_T^* \in \mathcal{L}_c(\mathcal{W} ; H^1_0(0,T;U))$ so that $F_T \in \mathcal{L}_c(H^{-1}(0,T;U) ; \mathcal{W}')$. Note carefully that this makes sense when $\mathcal{W}'$ is a super-space of $X$, and the only reasonable way to expect that is to require the inclusion $\mathcal{W} \subset X$ to be dense. However, the constraint $B^*S_T^*\varphi = 0$ cannot be dense in $X$ when the two conditions
\[
B^*S_T^* \in \mathcal{L}_c(X;U),\quad \mathrm{and} \quad B^*S_T^* \neq 0,
\]
are satisfied. The first above condition may occur when $S_T^*$ is regularizing. \par 
Let us now consider the heat equation on a bounded interval that is controlled at one end by the Neuman action:
\[
\left\lbrace \begin{array}{cccc}
z_t &=& z_{xx}, & 0 < x < \pi, \\
z_x(t,0) &=& u(t),\\
z_x(t,\pi) &=& 0.
\end{array}\right.
\]
Following \cite[Section 10.2.1]{tucwei}, we represent this system by the LTI system $\Sigma(A,B)$ where
\[
X = L^2(0,\pi),\quad U = \mathbb{C},
\]
and
\[
Az = z_{xx},\quad D(A) = \{ z \in H^2(0,\pi) : z_x(0) = z_x(\pi) = 0, \},\quad B^* = - \delta_0.
\]
Let $\mathcal{W}$ be a topological vector space such that the inclusion $\mathcal{W} \subset L^2(0,\pi)$ is dense and continuous. For then $X$ naturally embeds into $\mathcal{W}'$ through $z \mapsto (z,\cdot)_X$, when $\mathcal{W}'$ is endowed of the weak-$*$ topology. Let us denote $\mathcal{W}'_{w-*}$ the corresponding topological vector space and assume that $z_0$ is taken to be zero.
\begin{Proposition}
For all topological vector space $\mathcal{W}$ as above, the map $u \mapsto z(T)$ has no linear and continuous extension from $H^{-1}(0,T)$ to $\mathcal{W}'_{w-*}$.
\end{Proposition}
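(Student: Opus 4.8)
The plan is to argue by contradiction, using the weak-$*$ topology to reduce the existence of such an extension to a scalar statement about the adjoint $F_T^*$, and then to recover exactly the obstruction announced just before the statement, namely that the constraint $B^*S_T^*\varphi = 0$ cannot hold on a dense subspace of $X$. Recall from the proof of Proposition \eqref{prop:extension_F_T} that $F_T^*\varphi = [t \mapsto B^*S_t^*\varphi]$ defines a bounded operator $F_T^* \in \mathcal{L}_c(X ; L^2(0,T))$, and that here $B^* = -\delta_0$ while $A$ is self-adjoint, so that $S_t^* = S_t$ and $(F_T^*\varphi)(t) = -(S_t\varphi)(0)$ for a.e.\ $t$.

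Suppose, for contradiction, that a linear and continuous extension $G \colon H^{-1}(0,T) \to \mathcal{W}'_{w-*}$ of $u \mapsto z(T) = F_Tu$ exists. Fix $\varphi \in \mathcal{W}$, identified with its image in $X$. By definition of the weak-$*$ topology the evaluation $\mu \mapsto \langle \mu , \varphi\rangle$ is continuous on $\mathcal{W}'_{w-*}$, hence $u \mapsto \langle Gu , \varphi\rangle$ is a continuous linear form on $H^{-1}(0,T)$. Since $H^{-1}(0,T) = (H^1_0(0,T))^*$ is reflexive, this form is represented by some $v \in H^1_0(0,T)$. On the dense subspace $L^2(0,T) \subset H^{-1}(0,T)$ it coincides with $u \mapsto (F_Tu , \varphi)_X = (u , F_T^*\varphi)_{L^2(0,T)}$, and the compatibility of the two pairings forces $F_T^*\varphi = v \in H^1_0(0,T)$.

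This is the first key step: for every $\varphi \in \mathcal{W}$ one has $F_T^*\varphi \in H^1_0(0,T)$, so using $H^1(0,T) \hookrightarrow C([0,T])$ its continuous representative vanishes at $t = T$. As $t \mapsto (S_t\varphi)(0)$ is continuous on $(0,T]$ by the smoothing of the heat semigroup, this reads $-(S_T\varphi)(0) = 0$. Introduce $\ell_T \colon \varphi \mapsto (S_T\varphi)(0)$; writing $(S_T\varphi)(0) = \langle \delta_0 , S_T\varphi\rangle = (g_T , \varphi)_X$ with $g_T := S_T^*\delta_0 \in X$ shows $\ell_T \in \mathcal{L}_c(X;\mathbb{C})$. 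The second key step is to check $g_T \neq 0$: expanding in the Neumann eigenbasis $(e_n)_{n\geq 0}$ of $A$ (with $Ae_n = -n^2 e_n$ and $e_n(0) \neq 0$) gives $g_T = \sum_{n\geq 0} e^{-n^2T} e_n(0)\, e_n$, whose constant component $\tfrac{1}{\pi}e_0$ does not vanish.

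Consequently $\ker \ell_T$ is a proper closed subspace of $X = L^2(0,\pi)$, while the previous step shows $\mathcal{W} \subset \ker\ell_T$. This contradicts the density of $\mathcal{W}$ in $X$, and the Proposition follows. The delicate points are the reflexivity argument pinning weak-$*$ continuity to membership of $F_T^*\varphi$ in $H^1_0(0,T)$, and the identification of the boundary value at $t=T$ with $(S_T\varphi)(0)$; once these are in place, the nonvanishing $g_T \neq 0$ — the concrete form of $B^*S_T^* \neq 0$ — closes the argument immediately.
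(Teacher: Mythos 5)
Your proof is correct and follows essentially the same route as the paper's: both reduce the existence of an extension, via weak-$*$ continuity and duality, to the statement that $F_T^*$ maps $\mathcal{W}$ into $H^1_0(0,T)$, and both then observe that the endpoint evaluation $\varphi \mapsto B^*S_T^*\varphi$ extends to a bounded functional on $X$ represented by a nonzero $L^2$ vector (your $g_T$, the paper's $\psi$), so that $\mathcal{W}$ lies in a proper closed hyperplane, contradicting density. Your write-up merely makes explicit the reflexivity and pivot-compatibility details that the paper compresses into ``reasoning by duality''; the only blemish is the harmless coefficient slip ($\tfrac{1}{\sqrt{\pi}}e_0$, not $\tfrac{1}{\pi}e_0$), which does not affect the conclusion $g_T \neq 0$.
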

\begin{proof}
We claim that the operator $\delta_{t=0} F_T^* \in \mathcal{L}_c(D(A^*) ; \mathbb{C})$ extends as a linear form on $X$. To see this, compute for $\varphi \in D(A^*)$, 
\[
\delta_{t=0} F_T^*\varphi =  B^*S_T^*\varphi = - \delta_{x=0} S_T^* \varphi = - \sum_{n=0}^\infty e^{\lambda_n T} (\varphi , c_n)_X c_n(0) = \left( \varphi ,  - \sum_{n=0}^\infty e^{\lambda_n T} c_n(0)c_n \right)_X,
\]
where we have denoted, for all $n \in \mathbb{N}$,
\[
\lambda_n = - n^2,
\]
the $n$-th eigenvalue of $A$, and 
\[
c_n(x) = \left\lbrace \begin{array}{ccc}
\sqrt{\frac{2}{\pi}} \cos(nx) & \mathrm{if} & n \in \mathbb{N}^* \\
\frac{1}{\sqrt{\pi}} & \mathrm{if} & n =0 
\end{array}\right. ,
\]
an associated eigenvector, so that $(c_n)_{n=0}^\infty$ forms a Hilbert basis of $X$. Denote
\[
\psi(x) := - \sum_{n=0}^\infty e^{\lambda_n T} c_n(0)c_n(x),
\]
which is clearly an $L^2(0,\pi)$ function, hence in $X$, so that 
\[
\delta_{t=0} F_T^*\varphi = (\varphi , \psi)_X,\quad \forall \varphi \in X,
\]
yields the announced extension. \par 
Assume by contradiction that $F_T$ has a $\mathcal{L}_c(H^{-1}(0,T) ; \mathcal{W}'_{w-*})$ extension. Then, reasoning by duality we obtain that $F_T^*$ maps $\mathcal{W}$ into $H^1_0(0,T)$. Thus, the linear form $\delta_{t=0} F_T^*$ is identically zero on $\mathcal{W}$, so that 
\[
\mathcal{W} \subset \opker \delta_{t=0}F_T^* = \{ \psi \}^\perp.
\]
Because $\psi$ is not identically zero, this negates the density of $\mathcal{W}$ in $X$, which is the seeken contradiction. 
\end{proof}
\subsubsection{A positive result}
Let us recall that the set $\mathcal{W}$ is defined by
\[
\mathcal{W} = \{ \varphi \in D(A^*) : B^* \varphi = B^*S_T^*\varphi = 0 \}.
\]
When endowed with the $D(A^*)$-norm, it becomes a Hilbert space. If $\mathcal{W}$ is a dense subset of $X$ we denote $\mathcal{W}^*$ its anti-dual space with respect to the pivot $X$.
\begin{Theorem}\label{theo:endpoint_H-1}
Assume that $\mathcal{W}$ is dense in $X$. Then the map $u \mapsto z(T)$ has a unique linear and continuous extension from $H^{-1}(0,T;U)$ to $\mathcal{W}^*$.
\end{Theorem}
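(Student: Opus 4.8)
The plan is to follow the duality scheme of Proposition \eqref{prop:extension_F_T}: instead of extending $F_T$ directly, I would first establish the regularity property
\[
F_T^* \in \mathcal{L}_c(\mathcal{W}; H^1_0(0,T;U)),
\]
and then dualize. After the usual reduction the adjoint reads $F_T^*\varphi = [t \mapsto B^*S_t^*\varphi]$, and since $H^{-1}(0,T;U) = (H^1_0(0,T;U))^*$, the above mapping property produces at once a continuous map $F_T \colon H^{-1}(0,T;U) \to \mathcal{W}^*$. The density of $\mathcal{W}$ in $X$ is used precisely here: it guarantees the continuous inclusion $X \hookrightarrow \mathcal{W}^*$, so that for $u \in L^2(0,T;U)$ the element $F_T u \in X$ is unambiguously viewed in $\mathcal{W}^*$ and the pairing $\langle F_T u, \varphi\rangle_{\mathcal{W}^*,\mathcal{W}}$ coincides with $(F_T u, \varphi)_X$.

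First I would check that for every $\varphi \in D(A^*)$ the curve $g_\varphi(t) := B^*S_t^*\varphi$ is a genuine element of $C([0,T];U)$, because $(S_t^*)$ restricts to a $C_0$ semigroup on $D(A^*)$ and $B^* \in \mathcal{L}_c(D(A^*);U)$, and that moreover $g_\varphi \in H^1(0,T;U)$ with
\[
\|g_\varphi\|_{H^1(0,T;U)} \lesssim \|\varphi\|_{D(A^*)}.
\]
The heuristic derivative is $g_\varphi'(t) = B^*S_t^*A^*\varphi$, and the $2$-admissibility of $B$ bounds $\|g_\varphi\|_{L^2}$ by $\|\varphi\|_X$ and this candidate derivative by $\|A^*\varphi\|_X$, which together yield the displayed estimate.

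The main obstacle is to justify the identity $g_\varphi' = B^*S_t^*A^*\varphi$ when $\varphi$ lies only in $D(A^*)$: then $A^*\varphi \in X$ need not belong to $D(A^*)$, so $B^*S_t^*A^*\varphi$ is not defined pointwise and must be read through the admissibility extension of $\varphi \mapsto B^*S_\cdot^*\varphi$ from $D(A^*)$ to $X$. I would resolve this by approximation: choose $\varphi_n \in D(A^{*2})$ with $\varphi_n \to \varphi$ in $D(A^*)$, for which the differentiation formula is classical as in Step 1 of Proposition \eqref{prop:extension_F_T}. Admissibility then gives $g_{\varphi_n} \to g_\varphi$ and $g_{\varphi_n}' = B^*S_\cdot^*A^*\varphi_n \to B^*S_\cdot^*A^*\varphi$ in $L^2(0,T;U)$, and the closedness of the distributional derivative under $L^2$-convergence upgrades this to $g_\varphi \in H^1(0,T;U)$ with the announced formula and bound.

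Finally, restricting to $\varphi \in \mathcal{W}$, the continuous representative of $g_\varphi$ has endpoint values $g_\varphi(0) = B^*\varphi = 0$ and $g_\varphi(T) = B^*S_T^*\varphi = 0$, which are exactly the two constraints defining $\mathcal{W}$; hence $g_\varphi \in H^1_0(0,T;U)$ and $F_T^* \in \mathcal{L}_c(\mathcal{W};H^1_0(0,T;U))$. Dualizing as in Step 2 of Proposition \eqref{prop:extension_F_T}, for $u \in L^2(0,T;U)$ and $\varphi \in \mathcal{W}$ I would write
\[
\langle F_T u, \varphi\rangle_{\mathcal{W}^*,\mathcal{W}} = (u, F_T^*\varphi)_{L^2(0,T;U)} = \langle u, F_T^*\varphi\rangle_{H^{-1},H^1_0},
\]
and bound the right-hand side by $\|u\|_{H^{-1}}\,\|F_T^*\|\,\|\varphi\|_{\mathcal{W}}$, whence $\|F_T u\|_{\mathcal{W}^*} \lesssim \|u\|_{H^{-1}}$. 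Since $L^2(0,T;U)$ is dense in $H^{-1}(0,T;U)$, this estimate delivers the unique continuous extension, completing the proof.
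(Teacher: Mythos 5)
Your proposal is correct and follows the same route as the paper: the paper's proof is a one-line appeal to exactly this duality, namely that $F_T^*$ maps $\mathcal{W}$ into $H^1_0(0,T;U)$, which it calls trivial because the bound $\|F_T^*\varphi\|_{H^1(0,T;U)} \lesssim \|\varphi\|_{D(A^*)}$ was already established in Step 1 of Proposition \eqref{prop:extension_F_T} (case $N=1$) and the two endpoint conditions are the defining constraints of $\mathcal{W}$. Your additional approximation argument from $D(A^{*2})$ to justify the derivative formula $g_\varphi' = B^*S_\cdot^*A^*\varphi$ via admissibility just makes explicit the density argument the paper already carried out there, so no new content is needed.
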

\begin{proof}
By duality, it boils down to check that $F_T^*$ maps $\mathcal{W}$ to $H^1_0(0,T;U)$, which is trivial.
\end{proof}
\begin{Example} Consider the following one-dimensional wave equation that is controlled at one end through the Neumann action
\begin{equation}\label{eq:wave_controle_H-1}
\left\lbrace \begin{array}{cccc}
z_{tt} &=& z_{xx}, & 0 < x < \pi,\\
z_x(t,0) &=& u(t),\\
z(t,\pi) &=& 0.
\end{array}\right.
\end{equation}
Following \cite[Section 10.2.2]{tucwei} we model this system as the LTI system $\Sigma(A,B)$ defined by 
\[
X = H^1_{(\pi)}(0,\pi)\times  L^2(0,\pi),\quad A = \left( \begin{array}{cc}
0 & 1 \\
\frac{d^2}{dx^2} & 0 \\
\end{array}\right),\quad U = \mathbb{C},
\]
and
\[
D(A) = \{ (u,v) \in H^2(0,\pi) \times H^1(0,\pi) : u(\pi) = u_x(0) = v(\pi)  = 0 \},\quad B^*(\varphi , \psi) = - \psi(0).
\]
We claim that the above system falls into the realm of Theorem \eqref{theo:endpoint_H-1} above. To check this, we have to show that the set $\mathcal{W}$ defined above is dense in $X$. We will do so using the method of the characteristics. We acknowledge that $A$ is skew-adjoint, so that $D(A^*) = D(A)$ and $A^* = -A$. Let $(\varphi,\psi) \in D(A^*)$, the two conditions for $(\varphi,\psi)$ to lie in $\mathcal{W}$ are that 
\[
B^*(\varphi, \psi) = 0,\quad \mathrm{and} \quad B^*S_T^*(\varphi,\psi) = 0.
\]
The first condition amounts to 
\[
\psi(0) = 0,
\]
and the second amounts to 
\begin{equation}\label{eq:charac_w}
w_t(T,0) = 0,
\end{equation}
where $w$ is the solution of 
\[
\left\lbrace \begin{array}{ccccc}
w_{tt}&=&w_{xx},& 0 < t < T,& 0< x < \pi,\\
w_x(t,0) &=& 0,\\
w(t,\pi) &=& 0,\\
w(0,x) &=& \varphi(x),\\
w_t(0,x) &=& - \psi(x). 
\end{array}\right.
\]
To fix the ideas, assume for a moment that $0 < T \leq \pi$, we introduce the so-called Riemann invariants
\[
\xi = w_t-w_x,\quad \eta = w_t+w_x,
\]
so that $\xi$ and $\eta$ respectively satisfy the transport to the right and the transport to the left:
\[
\left\lbrace \begin{array}{ccc}
\xi_t + \xi_x &=& 0,\\
\xi(t,0) &=& \eta(t,0),\\
\end{array} \right.,\quad \left\lbrace \begin{array}{ccc}
\eta_t-\eta_x &=& 0, \\
\eta(t,\pi) &=& -\xi(t,\pi).
\end{array}\right.
\]
The condition \eqref{eq:charac_w} then writes as 
\[
(\xi + \eta)(T,0) = 0,
\]
so that given the boundary condition of $\xi$, it translates as 
\[
\xi(T,0) = \eta(T,0) = 0.
\]
Note that $\xi$ is transported to the right, and that its boundary condition on the left is given by $\eta$, from which de deduce that \eqref{eq:charac_w} is nothing else than 
\[
\eta(T,0) = 0.
\]
Using that $\eta$ solves the transport to the left we see that this is equivalent to 
\[
-\psi(T) + \varphi_x(T) = 0.
\]
Therefore, 
\[
\mathcal{W} = \{ (\varphi,\psi) \in D(A^*) : \psi(0) = -\psi(T) + \varphi_x(T) = 0 \},
\]
and to check that this set is dense in $X$, it is enough to verify that the smaller set 
\[
\tilde{\mathcal{W}} :=  \{ (\varphi,\psi) \in D(A^*) : \psi(0) = \psi(T) = \varphi_x(T) = 0 \},
\]
is dense in $X$. Separating the variables, this boils down to verifying that 
\[
\{ \varphi \in H^2(0,\pi) : \varphi(\pi) = \varphi_x(0) = \varphi_x(T) = 0  \},
\]
is dense in $H^1_{(\pi)}(0,\pi)$, and that 
\[
\{ \psi \in H^1(0,\pi) : \psi(0) = \psi(T) = 0 \},
\]
is dense in $L^2(0,\pi)$. The proofs of these two facts are elementary, we omit them and we refer the interested reader to \cite[Lemme 11.1]{Lions_Magenes} or \cite[Lemma 17.3]{tartar}. This shows that $\mathcal{W}$ is dense in $X$, so that Theorem \eqref{theo:endpoint_H-1} applies, assuming that $T \leq \pi$. In case $T > 0$ is arbitrary one needs to propagate backward in time the condition 
\[
\eta(T,0) = 0,
\]
along caracteristic rays, making them successively reflect with respect to the segments $[0,T] \times \{ 0\}$ and $[0,T] \times \{\pi \}$. Therefore, the condition 
\[
\eta(T,0) = 0,
\]
either takes one of the forms 
\[
\eta(0,\alpha) = 0,\quad \mathrm{or} \quad \xi(0,\alpha) = 0,
\]
for some $\alpha \in [0,\pi]$. The density of $\tilde{\mathcal{W}}$ (with $\alpha$ in place of $T$) still brings the result, because 
\[
\xi(0,x) = -\psi(x) - \varphi(x),\quad \eta(0,x) = -\psi(x) + \varphi(x),
\]
for all $x \in [0,\pi]$.
\end{Example}
\subsection{Time regularity}
In Proposition \eqref{Prop:optimal_reg_N=1} we have shown that in full generality, for control laws $u$ in $\mathcal{U}_{-1}$, the state curve is $L^2$ in time without any possible improvement. Here we describe a general situation where this can be improved, in a different functional setting. We define for arbitrary $k \in \mathbb{N}$ the space 
\[
W_k = \{ \varphi \in D(A^{*k}) : B^*\varphi = \cdots = B^*A^{*(k-1)} \varphi = 0 \} = \bigcap_{i=0}^{k-1} \opker B^*A^{*i}.
\]
If $W_k$ is a dense subset of $X$, we can consider $W_k^*$ its anti-dual space with respect to $X$, that we will denote $W_{-k}$. 
\begin{Proposition}\label{prop:time_reg_arbitrary}
Let $N,M \in \mathbb{N}$ arbitrary and assume that $W_{N+M}$ is a dense subset of $X$. Then, the map $u \mapsto z(\cdot)$ extends linearly and continuously from $\mathcal{U}_{-M}$ to $C^N([0,T] ; W_{-N-M})$.
\end{Proposition}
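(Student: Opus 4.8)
The plan is to follow the by-now-standard duality scheme of the paper: establish a time-regularity statement for the adjoint observation map $\varphi \mapsto B^*S_\cdot^*\varphi$ on $W_{N+M}$, and transfer it to the primal map $F$ by duality, the density of $L^2(0,T;U)$ in $\mathcal{U}_{-M}$ guaranteeing uniqueness. Throughout I work with $z_0=0$, so that $z(\cdot)=Fu$ with $(Fu)(t)=\int_0^t S_{t-s}Bu(s)\,ds$, and I aim to define the candidate derivatives $z^{(k)}(t)\in W_{-N-M}$ for $0\le k\le N$ through the prescription
\[
\langle z^{(k)}(t),\varphi\rangle_{W_{-N-M},W_{N+M}} = \langle u, g_{t,k}\rangle_{\mathcal{U}_{-M},\mathcal{U}_M},\qquad g_{t,k}(s):=\mathbf{1}_{[0,t]}(s)\,q^{(k)}(t-s),
\]
where $q:=B^*S_\cdot^*\varphi$. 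The motivation is the elementary computation valid for $u\in L^2(0,T;U)$: writing $\langle z(t),\varphi\rangle=\int_0^t (u(s),q(t-s))_U\,ds$ and differentiating $k$ times in $t$ by the Leibniz rule, every boundary term is a multiple of $q^{(j)}(0)=B^*A^{*j}\varphi$ with $j\le N-1$, hence vanishes because $\varphi\in W_{N+M}$; this leaves exactly $\frac{d^k}{dt^k}\langle z(t),\varphi\rangle=\langle u,g_{t,k}\rangle$.

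First I would record the adjoint regularity. By Proposition \eqref{prop:extension_F_T} applied with $N+M$ in place of $N$, the map $\varphi\mapsto q=B^*S_\cdot^*\varphi$ is bounded from $X_{N+M}=D(A^{*(N+M)})$ into $\mathcal{U}_{N+M}=H^{N+M}(0,T;U)$, with $q^{(j)}=B^*S_\cdot^*A^{*j}\varphi$ for $j\le N+M$; in particular $q\in C^{N+M-1}([0,T];U)$ and $q^{(i)}(0)=B^*A^{*i}\varphi=0$ for all $i\le N+M-1$ whenever $\varphi\in W_{N+M}$. This step is where the $2$-admissibility of $B$ is essential: the top derivative $q^{(N+M)}$ is only an $L^2$ function, controlled by $\|A^{*(N+M)}\varphi\|_X$ through admissibility rather than by the mere boundedness of $B^*$ on $D(A^*)$, and it is precisely this extra half-derivative that will make the top test function $g_{t,N}$ admissible below.

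Next I would check that $g_{t,k}\in\mathcal{U}_M$ with a bound uniform in $t$. Since $q\in H^{N+M}$ and $k\le N$, the reflected profile $q^{(k)}(t-\cdot)$ lies in $H^M(0,t;U)$ with norm controlled by $\|q\|_{H^{N+M}}\lesssim\|\varphi\|_{W_{N+M}}$. The only delicate point is that extending by zero past $s=t$ must not create jumps: the left traces of the derivatives of $g_{t,k}$ at $s=t$ are the numbers $(-1)^l q^{(k+l)}(0)$ for $0\le l\le M-1$, and these vanish because $k+l\le N+M-1$ and $\varphi\in W_{N+M}$. Thus the $N+M$ constraints defining $W_{N+M}$ are consumed exactly twice over: once to kill the Leibniz boundary terms (indices $0,\dots,N-1$) and once to glue the test functions without jumps (indices $k,\dots,k+M-1$ with $k\le N$). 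I would then note that $t\mapsto g_{t,k}$ is continuous into $\mathcal{U}_M$, by combining continuity of translations in $H^M$ with the same vanishing traces.

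Finally, the prescription above defines $z^{(k)}(t)\in W_{-N-M}$ with $\|z^{(k)}(t)\|_{W_{-N-M}}\lesssim\|u\|_{\mathcal{U}_{-M}}$ uniformly in $t$; continuity of $t\mapsto g_{t,k}$ gives $z^{(k)}\in C([0,T];W_{-N-M})$, and differentiating the scalar identity $\langle z^{(k)}(t),\varphi\rangle=\langle u,g_{t,k}\rangle$ in $t$, using $\partial_t g_{t,k}=g_{t,k+1}$ in $\mathcal{U}_M$ for $k\le N-1$ (again thanks to $q^{(k)}(0)=0$), identifies $z^{(k+1)}$ as the derivative of $z^{(k)}$ in $W_{-N-M}$. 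Hence $z\in C^N([0,T];W_{-N-M})$, the construction agrees with $Fu$ for $u\in L^2(0,T;U)$, and uniqueness of the extension follows from the density of $L^2(0,T;U)$ in $\mathcal{U}_{-M}$ together with the uniform bounds. The main obstacle is the middle step, namely the verification that the truncated reflections $g_{t,k}$ genuinely belong to $H^M$ and depend continuously on $t$: this is where the precise count of vanishing conditions built into $W_{N+M}$ must be matched against the two distinct mechanisms (boundary terms and zero-extension) that consume them.
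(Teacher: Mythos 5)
Your scheme is at heart the same as the paper's: the adjoint regularity of $\varphi\mapsto B^*S^*_\cdot\varphi$ from Step 1 of the proof of Proposition \eqref{prop:extension_F_T}, the zero-extension trace argument made possible by the constraints defining $W_{N+M}$, Leibniz boundary terms killed by those same constraints, and a duality/density transfer. The paper organizes this as an induction on $N$: the base case $N=0$ is exactly your $g_{t,0}$ computation, and the induction step writes $\frac{d^k}{dt^k}z(t)=A^k\int_0^tS_{t-s}Bu(s)\,ds$, which, paired against $\varphi$, is precisely your $\langle u,g_{t,k}\rangle$. So your direct, all-derivatives-at-once construction, including the correct double consumption of the $N+M$ vanishing conditions (indices $0,\dots,N-1$ for the boundary terms, indices $k,\dots,k+M-1$ for the gluing), is a faithful reorganization of the same argument, and your observation about where $2$-admissibility is genuinely needed (the top derivative $q^{(N+M)}$) is accurate.

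There is, however, one genuine gap in the final assembly. The inference ``continuity of $t\mapsto g_{t,k}$ gives $z^{(k)}\in C([0,T];W_{-N-M})$'' requires $t\mapsto g_{t,k}[\varphi]$ to be continuous into $\mathcal{U}_M$ \emph{uniformly over $\varphi$ in the unit ball of $W_{N+M}$}; pointwise-in-$\varphi$ continuity, which is all that ``continuity of translations in $H^M$'' provides, only yields weak-$*$ continuity of $z^{(k)}$ when $u\in\mathcal{U}_{-M}$ is a general element. For $k\le N-1$ uniformity can be rescued because a spare derivative is available, namely $\|q^{(k)}(\cdot+h)-q^{(k)}\|_{H^M}\le |h|\,\|q^{(k+1)}\|_{H^M}\lesssim |h|\,\|\varphi\|_{W_{N+M}}$; but at the top index $k=N$ the profile $q^{(N+M)}$ is merely $L^2$, and translation is \emph{not} uniformly continuous on bounded subsets of $L^2$, so the estimate degenerates; the same obstruction affects the step identifying $z^{(N)}$ as the norm-derivative of $z^{(N-1)}$. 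The repair is to reorder your ending: first show that for $u\in L^2(0,T;U)$ one has $Fu\in C^N([0,T];W_{-N-M})$ in the norm sense --- for such a \emph{fixed} $u$ the top derivative is norm continuous because a change of variables throws the translation onto $u$, as in $\langle z^{(N)}(t+h)-z^{(N)}(t),\varphi\rangle=\int_0^T\bigl(\tilde u(t+h-\sigma)-\tilde u(t-\sigma),q^{(N)}(\sigma)\bigr)_U d\sigma$ with $\tilde u$ the extension of $u$ by zero, which is bounded by $\|\tilde u(\cdot-h)-\tilde u\|_{L^2}\|\varphi\|_{W_{N+M}}$, and the scalar derivative identities then upgrade to norm differentiability by the fundamental theorem of calculus in $W_{-N-M}$. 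Combined with your uniform bound $\|Fu\|_{C^N([0,T];W_{-N-M})}\lesssim\|u\|_{\mathcal{U}_{-M}}$ and the completeness of $C^N([0,T];W_{-N-M})$, density of $L^2(0,T;U)$ in $\mathcal{U}_{-M}$ then yields the extension. In other words, the density argument you invoke only for \emph{uniqueness} must also carry the \emph{existence} and regularity of the extension; this is exactly how the paper handles the analogous point in the proof of Proposition \eqref{Prop:optimal_reg_N=1}, via a uniform bound on a dense subspace plus closedness of the continuous functions inside $L^\infty$.
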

\begin{proof}
The proof is done by induction on $N \in \mathbb{N}$. Assume first that $N = 0$, we have to show that for any $M \in \mathbb{N}$, the map 
\[
F \in \mathcal{L}_c(L^2(0,T;U) ; C([0,T];X)),\quad (Fu)(t) = \int_0^t S_{t-s}Bu(s)ds,
\]
extends as a bounded operator from $\mathcal{U}_{-M}$ to $C([0,T] ; W_{-M})$. Fix $M \in \mathbb{N}$, we let $u \in L^2(0,T;U)$ and fix $\varphi \in W_M$. We compute, for all $t \in [0,T]$,
\[
((Fu)(t),\varphi)_X = \int_0^t (u(s) , B^*S_{t-s}^*\varphi)_u ds = \int_0^T (u(s) , 1_{[0,t]}(s)B^*S_{t-s}^*\varphi)_u ds.
\]
Observe that because $\varphi \in D(A^{*M})$, the function 
\[
s \mapsto B^*S_{t-s}^*\varphi,
\]
is of class $H^M(0,t;U)$. We further claim that extending it by $0$ on $(t,T)$ makes it a $H^M$ function. Indeed, because the interval $(0,T)$ is one dimensional it is enough to check that 
\[
\forall k = 0, ..., M-1,\quad \frac{d^k}{ds^k} B^*S_{t-s}^* \varphi \xrightarrow[s \rightarrow t^-]{U} 0.
\] 
But this follows from the definition of $W_{-M}$, owing to the computation 
\[
\frac{d^k}{ds^k} B^*S_{t-s}^* \varphi = B^*S_{t-s}^* (-A^*)^k \varphi \xrightarrow[s \rightarrow t^-]{U} (-1)^k B^* A^{*k} \varphi = 0.
\]
We therefore infer that 
\[
\left[ s \mapsto 1_{[0,t]}(s)B^*S_{t-s}^*\varphi \right] \in H^M(0,T;U).
\]
To complete the proof of the initialization of the induction, we are left to find a bound 
\[
\| 1_{[0,t]}(\cdot)B^*S_{t-\cdot}^*\varphi \|_{H^M(0,T;U)} \leq C \| \varphi\|_{D(A^{*M})},
\]
with $C$ not depending on $\varphi$ nor on $t$. Going back to the first Step of the proof of Proposition \eqref{prop:extension_F_T}, we see that 
\begin{align*}
\| 1_{[0,t]}(\cdot)B^*S_{t-\cdot}^*\varphi \|_{H^M(0,T;U)} &= \| B^*S_{t-\cdot}^*\varphi \|_{H^M(0,t;U)} \\
&= \|F_t^* \varphi\|_{H^M(0,t;U)} \\
&\leq C_{\operatorname{adm}}(t)C(M)\|\varphi\|_{D(A^{*M})} \\
&\leq C_{\operatorname{adm}}(T)C(M)\|\varphi\|_{D(A^{*M})},
\end{align*}
where 
\[
C_{\operatorname{adm}}(t) := \sup\left\lbrace \frac{\|S_t^*\varphi\|_{X}}{\|F_t^*\varphi\|_{L^2(0,t;U)}} : \varphi \in D(A^*),\quad F_t^*\varphi \neq 0 \right\rbrace,
\]
is the optimal admissibility constant for the system $\Sigma(A,B)$ at time $t$, and $C(M)$ is a universal constant only depending on $M$. This completes the initialization of the induction. \par 
We now let $N \in \mathbb{N}$ be arbitrary such that for all $M \in \mathbb{N}$, the claimed regularity holds. We let $M \in \mathbb{N}$ fixed, assume that $W_{N+M+1}$ is a dense subset of $X$, and show that 
\[
F \in \mathcal{L}_c( \mathcal{U}_{-M} ; C^{N+1}([0,T] ; W_{-N-M-1})).
\] 
We let $u \in L^2(0,T;U)$, note that in view of the induction hypothesis we have 
\[
Fu \in C^N([0,T] ; W_{-N-M}).
\]
We compute, for arbitrary $\varphi \in W_{N+M+1}$, in the sense of distributions,
\begin{align*}
\frac{d}{dt} ((Fu)(t) , \varphi)_X &= \frac{d}{dt} \int_0^t (u(s),B^*S_{t-s}^*\varphi)_U ds  \\
&= \int_0^t (u(s) , B^*S_{t-s}^*A^*\varphi)_U ds + (u(t) , B^*\varphi)_U\\
&= \int_0^t (u(s) , B^*S_{t-s}^*A^*\varphi)_U ds \\
&= \left( \int_0^t S_{t-s} B u(s) ds , A^*\varphi \right)_X \\
&= \left( A \int_0^t S_{t-s} B u(s) ds , \varphi \right)_X,
\end{align*}
where the second equality is the Leibniz rule for the differentiation of parametrized integrals. From density of $W_{N+M+1}$ in $X$, we deduce that 
\[
\frac{d}{dt} Fu = A \int_0^t S_{t-s} B u(s) ds,
\]
for all $t \in [0,T]$. Differentiating again $N-1$ times, we arrive to 
\[
\frac{d^N}{dt^N} Fu = A^N \int_0^t S_{t-s} B u(s) ds.
\]
We are left to show that the above right-hand side is a $C^1([0,T] ; W_{N-M-1})$ function, with an estimate that does not depend on $u$. To do so, observe that the same scalar derivation as above yields
\[
\frac{d}{dt} A^N \int_0^t S_{t-s} B u(s) ds = A^{N+1} \int_0^t S_{t-s} B u(s) ds \quad \mathrm{in} \quad \mathcal{D}'(0,T;W_{-N-M-1}).
\]
This ends the proof as the $C([0,T] ; W_{-N-M-1})$-norm of the right-hand side above is controlled by the $\mathcal{U}_{-M}$-norm of $u$, owing to the computations performed for the case $N=0$.
\end{proof}
\section{Duality between controllability and observability}
From Example \eqref{ex:toy_model} we see that the right object to consider for controllability is the generalized final state, which is the extension of the operator $\Xi_T$, still denoted by the same symbol. This calls for a definition of what we mean by ``controllability" when the controls are less regular than $L^2$. Fix $N,M \in \mathbb{Z}$ arbitrary, $\nu := - \min(0,N,M)$, and acknowledge that $\nu \geq 0$ is such that the final state operator $\Xi_T$ maps $X_N \times \mathcal{U}_M$ to $X_{-\nu}$. Let us introduce
\begin{itemize}
\item The space of initial conditions: $\mathcal{X}$ is a closed linear subspace of $X_N$ for some $N \in \mathbb{Z}$, and $P : X_N \rightarrow X_N$ a linear and continuous projection onto $\mathcal{X}$; 
\item The space of control laws: $\mathcal{U}$ is a closed linear subspace of $\mathcal{U}_M$ for some $M \in \mathbb{Z}$ and $Q : \mathcal{U}_M \rightarrow \mathcal{U}_M$ a linear and continuous projection onto $\mathcal{U}$;
\item The output operator: $C \in \mathcal{L}_c(X_{-\nu} ; Y)$, where $Y$ is a Hilbert space.
\end{itemize}
\begin{Definition}\label{def:NC}
The system $\Sigma(A,B,C)$ is null controllable (at time $T$), with initial conditions in $\mathcal{X}$ and control laws in $\mathcal{U}$ if 
\[
\forall z_0 \in \mathcal{X},\quad \exists u \in \mathcal{U},\quad C\Xi_T (z_0,u)=0.
\]
\end{Definition}
We let $Y^*$ be a realization of the anti-dual space of $Y$ and $C^* \in \mathcal{L}_c(Y^* ; X_{\nu})$ the corresponding adjoint operator.  Let us also denote $P^* \in \mathcal{L}_c(X_{-N})$ the adjoint operator of $P$ when $(X_N)^* = X_{-N}$, similarly $Q^* \in \mathcal{L}_c(\mathcal{U}_{-M})$, and $S_T^*$ the usual adjoint of $S_T$ (\textit{i.e.} when $X$ is a pivot space). 
\begin{Proposition}\label{prop:contro_general_rigorous}
The system $\Sigma(A,B,C)$ is null controllable (at time $T$), with initial conditions in $\mathcal{X}$ and control laws in $\mathcal{U}$ if and only if there is a constant $c >0$ such that for all $y^* \in Y^*$ there holds 
\[
\| P^* S_T^* C^* y^* \|_{X_{-N}} \leq c  \| Q^* B^* S_t^* C^* y^* \|_{\mathcal{U}_{-M}}.
\]
\end{Proposition}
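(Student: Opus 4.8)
The plan is to recast Definition \eqref{def:NC} as a range inclusion between two bounded operators mapping into the common Hilbert space $Y$, and then to appeal to the classical Hilbert-space duality between range inclusion and a majorization of adjoints (Douglas' lemma).

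First I would use Duhamel's formula together with Corollary \eqref{coro:extension_NM} and Theorem \eqref{theo:extension_precise} to write, for the extended operators,
\[
\Xi_T(z_0,u) = S_T z_0 + F_T u \in X_{-\nu},\quad (z_0,u) \in X_N \times \mathcal{U}_M.
\]
Since $\mathcal{X} = \opRange P$ and $\mathcal{U} = \opRange Q$, letting $z_0$ run over $\mathcal{X}$ (resp.\ $u$ over $\mathcal{U}$) is the same as letting $P z_0$ run over $z_0 \in X_N$ (resp.\ $Qu$ over $u \in \mathcal{U}_M$); and because $\mathcal{U}$ is a linear subspace, the existence of $u \in \mathcal{U}$ with $C F_T u = -C S_T z_0$ is exactly the requirement that $C S_T z_0$ belong to the range of $C F_T|_{\mathcal{U}}$. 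Hence Definition \eqref{def:NC} is equivalent to
\[
\opRange \left( C S_T P \right) \subseteq \opRange \left( C F_T Q \right),
\]
where $C S_T P \in \mathcal{L}_c(X_N;Y)$ and $C F_T Q \in \mathcal{L}_c(\mathcal{U}_M;Y)$ are bounded thanks to the extension statements (here $\min(0,N) \geq -\nu$ and $\min(0,M) \geq -\nu$ ensure $S_T z_0, F_T u \in X_{-\nu}$, so that $C$ may be applied).

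Next I would invoke Douglas' lemma: for two bounded operators $\mathcal{R},\mathcal{L}$ from Hilbert spaces into a common Hilbert space $Y$, one has $\opRange \mathcal{R} \subseteq \opRange \mathcal{L}$ if and only if there is a constant $c>0$ with $\|\mathcal{R}^* y^*\| \leq c\,\|\mathcal{L}^* y^*\|$ for every $y^* \in Y^*$. All the spaces at hand being Hilbert, this applies to $\mathcal{R} = C S_T P$ and $\mathcal{L} = C F_T Q$, and it only remains to compute the two adjoints and to match the norms.

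The core verification — and the step I expect to be the main obstacle — is to identify these adjoints across the various pivot spaces and Sobolev-tower levels. Writing $C^* \in \mathcal{L}_c(Y^*;X_\nu)$, one checks that $(C S_T P)^* = P^* S_T^* C^*$ and $(C F_T Q)^* = Q^* F_T^* C^*$, where $S_T^*$ is the semigroup adjoint and $F_T^* = \left[ \varphi \mapsto \left( t \mapsto B^* S_t^* \varphi \right) \right]$ is the observation operator identified in Step 1 of the proof of Proposition \eqref{prop:extension_F_T}. The delicate point is that these identities must be read with respect to the anti-duality pairings fixed by the towers $(X_N)_{N \in \mathbb{Z}}$ and $(\mathcal{U}_M)_{M \in \mathbb{Z}}$: the compatibility of the successive duality brackets (the third listed property of these towers) is precisely what guarantees that the abstract adjoint of the extended map $S_T : X_N \to X_{-\nu}$ coincides with $S_T^*$ seen as a bounded map $X_\nu \to X_{-N}$, and similarly that the adjoint of $F_T : \mathcal{U}_M \to X_{-\nu}$ is the observation map $X_\nu \to \mathcal{U}_{-M}$ above. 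Granting these identifications, $\mathcal{R}^* y^* = P^* S_T^* C^* y^* \in X_{-N}$ and $\mathcal{L}^* y^* = Q^* F_T^* C^* y^* = Q^* B^* S_t^* C^* y^* \in \mathcal{U}_{-M}$, so that the adjoint majorization furnished by Douglas' lemma reads
\[
\| P^* S_T^* C^* y^* \|_{X_{-N}} \leq c\, \| Q^* B^* S_t^* C^* y^* \|_{\mathcal{U}_{-M}},
\]
which is exactly the stated observability inequality, closing the equivalence.
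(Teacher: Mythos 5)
Your proposal is correct and takes essentially the same route as the paper: recast Definition of null controllability as the range inclusion $\opRange(CS_TP) \subseteq \opRange(CF_TQ)$ using the surjectivity of $P$ and $Q$, apply the Douglas lemma in the Hilbert-space setting, and identify the adjoints as $P^*S_T^*C^*$ and $Q^*B^*S_t^*C^*$ via the compatibility of the successive duality brackets of the Sobolev towers. The paper simply carries out in full the adjoint identification that you correctly single out as the delicate step.
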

\begin{proof}
Assume that the system is indeed null controlable. Then we obtain by definition
\[
\forall z_0 \in \mathcal{X},\quad \exists u \in \mathcal{U},\quad CS_Tz_0 + CF_Tu = 0.
\]
Therefore, 
\[
CS_T(\mathcal{X}) \subset CF_T (\mathcal{U}),
\]
and because $P$ and $Q$ are surjective maps this re-writes 
\[
CS_TP(X_N) \subset CF_TQ(\mathcal{U}_M).
\]
We interpret this inclusion as the ranges inclusion  
\[
\opRange L \subset \opRange R,
\]
where we have defined the operators
\[
L : \left\lbrace \begin{array}{ccc}
X_N & \longrightarrow & Y \\
z & \longmapsto & CS_TPz 
\end{array}\right. ,\quad
R : \left\lbrace \begin{array}{ccc}
\mathcal{U}_{M} & \longrightarrow & Y \\
u & \longmapsto & CF_TQu 
\end{array}\right. .
\]
Observe that $L$ is well-defined, linear and continuous. Indeed if $N \leq 0$ it writes as the composition 
\[
X_N \overset{P}{\longrightarrow} X_N \overset{S_T}{\longrightarrow} X_N \overset{i}{\hookrightarrow} X_{-\nu} \overset{C}{\longrightarrow} Y
\]
where $\overset{i}{\hookrightarrow}$ stands for a continuous injection. In case $N > 0$, $L$ writes as 
\[
X_N \overset{P}{\longrightarrow} X_N \overset{i}{\hookrightarrow} X \overset{S_T}{\longrightarrow} X \overset{i}{\hookrightarrow} X_{-\nu} \overset{C}{\longrightarrow} Y.
\]
Similarly we obtain that $R$ is well-defined linear and continuous.  \par  
We then apply the Douglas Lemma (see \cite[Proof of Theorem 2.44]{Coron}) to find $c>0$ such that 
\[
\forall y^* \in C^*,\quad \| L^* y^* \|_{X_{-N}} \leq c \| R^*y^*\|_{\mathcal{U}_{-M}}.
\]
To obtain the desired observability inequality, we are left to show that 
\[
\forall y^* \in Y^*,\quad L^* y^* = P^*S_T^*C^*y^*,
\]
and 
\[
\forall y^* \in Y^*,\quad R^* y^* = Q^*B^*S_{T-\cdot}^*C^*y^*.
\]
Let us proof the first equality, we begin by making formal computation to be justified carefully later on. We let $y^* \in Y^*$ and $\varphi \in X_\nu \cap X_N$, we compute
\begin{align*}
\langle L^* y^* , \varphi \rangle_{X_{-N} , X_N} &= \langle y^* , L \varphi \rangle_{Y^* , Y} \\
&= \langle y^* , CS_TP\varphi \rangle_{Y^*,Y} \\
&= \langle C^* y^* , S_T P \varphi \rangle_{X_\nu , X_{-\nu}} \\
&= \overline{\langle S_T P \varphi , C^* y^* \rangle_{X_{-\nu} , X_\nu}} \\
&= \overline{\langle  P \varphi ,S_T^* C^* y^* \rangle_{X_{-\nu} ,X_\nu}} \numberthis \label{eq:justif_dual_1} \\
&= \overline{\langle  P \varphi , S_T^*C^* y^* \rangle_{X_N , X_{-N}}} \numberthis \label{eq:justif_dual_2} \\
&= \overline{\langle \varphi , P^* S_T^* C^*y^* \rangle_{X_N , X_{-N}}} \\
&= \langle P^* S_T^* C^* y^* , \varphi \rangle_{X_{-N} , X_N},\numberthis \label{eq:justif_dual_3}
\end{align*}
where the only equalities to justify are \eqref{eq:justif_dual_1}, \eqref{eq:justif_dual_2} and \eqref{eq:justif_dual_3}. For \eqref{eq:justif_dual_1} we invoke the fact that the operator 
\[
\left\lbrace \begin{array}{ccc}
X_{-\nu} & \longrightarrow & X_{-\nu} \\
z & \longmapsto & S_Tz
\end{array}\right.
\]
has adjoint 
\[
\left\lbrace \begin{array}{ccc}
X_{\nu} & \longrightarrow & X_{\nu} \\
\varphi & \longmapsto & S_T^* \varphi
\end{array}\right. .
\]
For \eqref{eq:justif_dual_2}, we invoke 
\[
P \varphi \in X_{-\nu} \cap X_N,\quad S_T^*C^*y^* \in X_\nu \cap X_{-N},
\]
and the following symetrized compatibility of the successive duality brackets
\[
\forall N_1,N_2 \in \mathbb{Z},\quad \forall z \in X_{-\min(N_1,N_2)},\quad \forall \varphi \in X_{\max(N_1,N_2)},\quad \langle z , \varphi \rangle_{-N_1,N_1} = \langle z , \varphi \rangle_{-N_2,N_2}.
\]
For \eqref{eq:justif_dual_3} we invoke the following property
\[
\forall N \in \mathbb{Z},\quad \forall z \in X_{-N},\quad \forall \varphi \in X_N,\quad \overline{\langle z , \varphi \rangle_{X_{-N} , X_N}} = \langle \varphi , z \rangle_{X_N , X_{-N}}.
\]
The proof of the second equality is very similar and we omit it. \par 
Therefore, if the system is null controllable we obtain the above admissibility inequality. The proof of the converse assertion can be done using all the given arguments, since the Douglas Lemma is a logical equivalence. 
\end{proof}
\section{Application to a fluid-structure model}\label{Section:application}
\subsection{Presentation of the model}
In \cite{Zhang_Zuazua} the authors investigate the null controllability of the fluid-structure model \eqref{eq_zhang_zuazua} that we recall here :
\[
\left\lbrace \begin{array}{cccc}
u_t &=& u_{xx}, & 0 < x < 1 ,\\
v_{tt} &=& v_{xx}, & -1 < x < 0,\\
u(t,1) &=& g_1(t), \\
v(t,-1) &=& 0, \\
u(t,0) &=& v(t,0), \\
u_x(t,0) &=& v_x(t,0), \\
\end{array}\right.
\]
where $g_1$ is the control. Prior to state and prove our result we sum up the spectral analysis performed in \cite{Zhang_Zuazua}. The control system \eqref{eq_zhang_zuazua} is realized as an LTI system $\Sigma(A,B)$ defined on the state space 
\[
X = \left\lbrace 
\left( \begin{array}{c}
u \\
v \\
v_t 
\end{array} \right) \in H^1(0,1) \times H^1(-1,0) \times L^2(-1,0) : u(1) = 0,\quad v(-1) = 0,\quad u(0) = v(0) \right\rbrace .
\]
The control space is $U = \mathbb{C}$ and the adjoint of $A$ is defined as follows. It has domain 
\[
D(A^*) = \left\lbrace 
\left( \begin{array}{c}
f \\
g \\
h 
\end{array} \right) \in H^3(0,1) \times H^2(-1,0) \times H^1(-1,0) : \left \vert \begin{array}{ccccc}
f(1) &=& 0 \\
g(-1) &=& 0 \\
f(0) &=& g(0) \\
f_x(0) &=& g_x(0) \\
f_{xx}(1) &=& h(-1) &=& 0 \\
f_{xx}(0) &=& h(0)
\end{array}\right.  \right\rbrace ,
\]
and is defined algebraically via 
\[
\forall \left( \begin{array}{c}
f \\
g \\
h 
\end{array} \right) \in D(A^*),\quad A^* \left( \begin{array}{c}
f \\
g \\
h 
\end{array} \right) = \left( \begin{array}{c}
f_{xx} \\
h \\
g_{xx} 
\end{array} \right).
\]
Note that actually, interpreting \eqref{eq_zhang_zuazua} \textit{verbatim} one could expect 
\[A^* \left( \begin{array}{c}
f \\
g \\
h 
\end{array} \right) = \left( \begin{array}{c}
f_{xx} \\
-h \\
-g_{xx} 
\end{array} \right),
\]
but this is a harmless simplification which does not affect what follows. Further, the control operator is given by 
\[
\forall \left( \begin{array}{c}
f \\
g \\
h 
\end{array} \right) \in D(A^*),\quad B^* \left( \begin{array}{c}
f \\
g \\
h 
\end{array} \right)  = f_x(1).
\]
It turns out that this operator $A^*$ can be reduced in Jordan block in the following sense. There is a Riesz basis of $X$ that writes
\begin{equation}\label{eq_riesz_A*}
\cup_{j=1}^{n_0} \{ \varphi_{j,0} , ..., \varphi_{j,m_j-1}  \} \bigcup \{ \varphi_\ell^p \}_{\ell = \tilde{\ell}_1}^\infty \bigcup \{ \varphi_k^h \}_{|k| \geq \tilde{k}_1}^\infty,
\end{equation}
where 
\begin{itemize}
\item $n_0,\tilde{\ell}_1,\tilde{k}_1 \in \mathbb{N^*}$ and $m_0, ... ,m_{n_0-1} \in \mathbb{N}^*$;
\item For all $j = 1 , ... , n_0$, the vector $\varphi_{j,0}$ is an eigenvector of $A^*$ of algebraic multiplicity $m_j$ and $\{ \varphi_{j,0} , ..., \varphi_{j,m_j-1}  \}$ is the associated Jordan chain;
\item For all $\ell \geq \tilde{\ell}_1$, the vector $\varphi_\ell^p$ is a normalized eigenvector of $A^*$, denote $\lambda_\ell^p$ the corresponding eigenvalue;
\item For all $|k| \geq \tilde{k}_1$, the vector $\varphi_k^h$ is a normalized eigenvector of $A^*$, denote $\lambda_k^h$ the corresponding eigenvalue.
\end{itemize}
In the above, the superscript ``$p$" (resp. ``$h$") stand for ``parabolic" (resp. ``hyperbolic"). This denomination is because the branch of eigenvalues $(\lambda_\ell^p)_{\ell \geq \tilde{\ell}_1}$ (resp. $(\lambda_k^h)_{|k| \geq \tilde{k}_1}$) is asymptotically close to the eigenvalues of the heat (resp. wave) sub-system of \eqref{eq_zhang_zuazua}. \par 
Concerning the hyperbolic eigenvalues, we have the following asymptotics as $|k| \rightarrow \infty$, which is still from \cite{Zhang_Zuazua} :
\begin{equation}\label{eq_asymp_eigen}
\lambda_k^h = - \frac{1}{\sqrt{|1+2k|\pi}} + \left( \frac{1}{2} + k \right)\pi i  + \frac{\opsgn(k)}{\sqrt{|1+2k|\pi}}i + O\left( \frac{1}{|k|} \right).
\end{equation}
Moreover, we have another asymptotics from \cite{Zhang_Zuazua}, this one for the values of $B^*$ against the hyperbolic eigenvectors: for any $N \in \mathbb{N}$ there exists $c > 0$ such that 
\begin{equation}\label{eq_asymp_sorie}
\forall |k| \geq \tilde{k}_1,\quad \| B^*S_t^* \varphi_k^h \|_{H^N(0,T)} \leq c e^{-\sqrt{|k|}}.
\end{equation}
\subsection{Proof of the result}
Prior to prove Proposition \eqref{prop:zz_nc} let us collect an elementary fact and introduce some notations. Let us introduce the bi-orthogonal sequence of \eqref{eq_riesz_A*}
\[
\cup_{j=1}^{n_0} \{ z_{j,0} , ..., \varphi_{j,m_j-1}  \} \bigcup \{ z_\ell^p \}_{\ell = \tilde{\ell}_1}^\infty \bigcup \{ z_k^h \}_{|k| \geq \tilde{k}_1}^\infty.
\]
Because for all $|k| \geq \tilde{k}_1$, the vector $\varphi_k^h$ is an eigenvector of $A^*$ associated to the eigenvalue $\lambda_k^h$, it is automatic that $z_k^h$ is an eigenvector of $A$ associated to the eigenvalue $\overline{\lambda_k^h}$, see \cite[Lemma 3.2.5]{curtain_zwart}. This allows for the estimation
\begin{align*}
\|\varphi_k^h \|_{D(A^N)^*} &= \sup_{\|\varphi\|_{D(A^N)} = 1} | (\varphi_k^h,\varphi)_X| \\
&\geq \left| \left( \varphi_k^h ,  \frac{z_k^h}{\| z_k^h \|_{D(A^N)}} \right)_X \right| \\
&\gtrsim \frac{1}{\sqrt{1+ |\lambda_k^h|^{2N}}}. \numberthis \label{eq:eqtimate_DAN_hyp_eigen}
\end{align*}
Further consider the subspaces of $X$ defined by
\[
X_h := \overline{\opSpan}\{ z_k^h : |k| \geq \tilde{k}_1\},\quad X_p := \overline{\opSpan}\{ z_\ell^p : \ell \geq \tilde{\ell}_1 \} \oplus \opSpan \left(\cup_{j=1}^{n_0} \{ z_{j,0} , ..., z_{j,m_j-1} \} \right),
\]
which are such that $X = X_h \oplus X_p$, the sum being not perpendicular. Consider $P_h$ the projection of $X$ onto $X_h$ parallel to $X_p$, which writes as
\[
P_hz = \sum_{|k| \geq \tilde{k}_1} (z,\varphi_k^h) z_k^h,
\]
where $(\cdot,\cdot)$ stands from now on for the scalar product of $X$. 
\begin{proof}[Proof of Proposition \eqref{prop:zz_nc}] Let us first check that the statement of Proposition \eqref{prop:zz_nc} falls into the realm of Definition \eqref{def:NC}. This boils down to check that for some Hilbert space $Y$, the operator $P_h$ is $\mathcal{L}_c(X_{-N} ; Y)$. From \cite[Proposition 2.9.3]{tucwei} we know that the latter holds with $Y = X_{-N}$ as soon as $P_h^* X_N \subset X_N$. This can be readily seen computing
\[
P_h^*\varphi = \sum_{|k| \geq \tilde{k}_1} (\varphi,z_k^h) \varphi_k^h,
\]
for all $\varphi \in X$. \par 
Therefore the statement of the Proposition makes sense, but we cannot apply Proposition \eqref{prop:contro_general_rigorous} \textit{verbatim} because $X_h \cap D(A^N)$ is not a closed subspace of a space of the form $X_M$ for some $M \in \mathbb{Z}$. Nonetheless we claim that $\Sigma(A,B)$ is null controllable (with output operator and inputs as in the statement of the Proposition \eqref{prop:zz_nc}) if and only if the following observability inequality holds 
\begin{equation}\label{eq:obs_ineq_zuazua}
\| P_h^*S_T^*P_h^* \varphi \|_{D(A^N)^*} \leq c \| B^* S_t^* P_h^* \varphi\|_{H^N(0,T)}, \quad \varphi \in D(A^{*N}).
\end{equation}
Indeed, note that null controllability is equivalent to 
\[
\forall z_0 \in X_h \cap D(A^N),\quad \exists u \in \mathcal{U}_{-N},\quad P_h S_Tz_0 + P_h F_Tu = 0.
\]
It is elementary to verify that 
\[
X_h \cap D(A^N) = P_h D(A^N),
\]
hence the above assertion is itself equivalent to 
\[
\opRange P_h S_T P_h \subset \opRange P_h F_T.
\]
It is then straightforward to adapt the proof of Proposition \eqref{prop:contro_general_rigorous} to obtain that null controllability is characterized by \eqref{eq:obs_ineq_zuazua}. \par 
To show that \eqref{eq:obs_ineq_zuazua} cannot hold with a finite $c$, we test it against the hyperbolic eigenvectors $(\varphi_k^h)_{|k| \geq \tilde{k}_1}$, and show that
\[
\frac{\left\|B^*S_t^*P_h^*\varphi_k^h\right\|_{H^N(0,T)}}{\left\| P_h^* S_T^* P_h^* \varphi_k^h \right\|_{D(A^N)^*}} \xrightarrow[k \rightarrow \infty]{} 0.
\]
We can estimate 
\begin{align*}
\left\| P_h^* S_T^* P_h^* \varphi_k^h \right\|_{D(A^N)^*} &= \left\| S_T^* \varphi_k^h \right\|_{D(A^N)^*} \\
&= \left\| e^{T \lambda_k^h} \varphi_k^h \right\|_{D(A^N)^*} \\
&= e^{T \Re \lambda_k^h}\left\| \varphi_k^h \right\|_{D(A^N)^*} \\
&\gtrsim e^{T \Re \lambda_k^h} \frac{1}{\sqrt{1+ |\lambda_k^h|^{2N}}} \\
&\geq \frac{c}{\sqrt{1+ |\lambda_k^h|^{2N}}},
\end{align*}
for a constant $c > 0$ not depending on $|k| \geq \tilde{k}_1$, where the first inequality is from \eqref{eq:eqtimate_DAN_hyp_eigen} and the second inequality is from $(\Re \lambda_k^h)_{|k| \geq \tilde{k}_1}$ being bounded (see \eqref{eq_asymp_eigen}). Now invoke the estimate \eqref{eq_asymp_sorie} to obtain, for a constant $c > 0$ independent of $|k| \geq \tilde{k}_1$, that
\[
\frac{\left\|B^*S_t^*P_h^*\varphi_k^h\right\|_{H^N(0,T)}}{\left\| P_h^* S_T^* P_h^* \varphi_k^h \right\|_{D(A^N)^*}} \leq c e^{-\sqrt{|k|}} \sqrt{1+ |\lambda_k^h|^{2N}}.
\]
We conclude by invoking \eqref{eq_asymp_eigen}, which yields $(\lambda_k^h)_{|k| \geq \tilde{k}_1}$ having a polynomial growth.
\end{proof}
\section*{Acknowledgment}
The author is indebted to Olivier Glass, Pierre Lissy and Swann Marx for their numerous, useful comments and insights. The author would also like to thank Yubo Bai for the reference \cite{lions_contro}.
\printbibliography[heading = bibintoc]

\end{document}